\documentclass[11pt,a4paper]{article}

\usepackage{amsmath, amsfonts, amssymb}
\usepackage{theorem}
\usepackage[dvips]{epsfig}
\usepackage{latexsym}
\usepackage{exscale}
\usepackage[latin1]{inputenc}
\setcounter{tocdepth}{3} \topmargin-0.6cm \oddsidemargin0cm
\evensidemargin-0.0cm \textheight23cm \textwidth16cm
\parsep0ex
\itemsep0ex
\topsep0.5ex
\partopsep0ex


\flushbottom


\newtheorem{thm}{Theorem}[section]
\newtheorem{lem}[thm]{Lemma}
\newtheorem{prop}[thm]{Proposition}

\newtheorem{cor}[thm]{Corollary}

\def\be#1\ee{\begin{equation}#1\end{equation}}
\newcommand{\bea}{\begin{eqnarray}}
\newcommand{\eea}{\end{eqnarray}}
\newcommand{\beaa}{\begin{eqnarray*}}
\newcommand{\eeaa}{\end{eqnarray*}}
\newcommand{\bei}{\begin{itemize}}
\newcommand{\eei}{\end{itemize}}
\newcommand{\bee}{\begin{enumerate}}
\newcommand{\eee}{\end{enumerate}}

\def\norm#1{\left\|#1\right\|}             
\def\abs#1{\left\vert #1 \right\vert}      

\def\set#1{\left\{#1\right\}}





\def\Z{{\mathbb Z}}
\def\N{{\mathbb N}}

\def\ex{\mathrm{e}}


\newcommand{\mA}{{\mathcal A}}

\def\al{\alpha}
\newcommand{\B}{{\mathcal{B}}}
\newcommand{\BB}{{\B_\infty}}

\newcommand{\dg}{\gamma}

\newcommand{\eps}{\varepsilon}

\newcommand{\II}{{\mathcal I}}

\def\L{{\mathcal{L}}_\mu}
\def\LL{\mathcal L}
\def\bbL{\mathbb L}

\def\on{{\mathbf 1}}

\newcommand{\mQ}{{\mathcal Q}}
\newcommand{\QQ}{(1-1/q)}

\def\s{\sigma}

\def\ro{\mathbf 0}

\newcommand{\thd}{{\B^\bullet_\mu}}   
\def\pq{\preceq}
\def\pc{\prec}
\def\qp{\succeq}

\def\PQ{\unlhd}
\def\PC{\lhd}

\newenvironment{proof}[1][] {\noindent {\bf Proof#1:} }{\hspace*{\fill}$\square$\medskip\par}

\begin{document}

\title{Compactness Properties
of  Weighted Summation Operators on Trees -- The Critical Case}
\author{Mikhail Lifshits \and Werner Linde
  }
 \date{\today}
    \maketitle
\begin{abstract}
The aim of this paper is to provide upper bounds for the entropy numbers of summation operators on trees in a critical case.
In a recent paper \cite{LifLin10} we elaborated a framework of weighted summation operators on general trees
where we related the entropy of the operator with those of the underlying tree equipped with an appropriate
metric. However, the results were left incomplete in a critical case of the entropy behavior,
because this case requires much more involved techniques. In the present article we fill the gap
left open in \cite{LifLin10}. To this end
we develop a method, working in the context of general trees and general weighted summation operators,
which was recently proposed in
\cite{Lif10} for a particular critical operator on the binary tree. Those problems  appeared in
natural way during the study of compactness properties of certain Volterra integral operators in a
critical case.
\end{abstract}
\bigskip
\bigskip
\bigskip
\bigskip
\bigskip

\vfill
\noindent
\textbf{ 2000 AMS Mathematics Subject Classification:} Primary: 47B06; Secondary: 06A06, 05C05.
\medskip

\noindent
\textbf{Key words and phrases:}\ Metrics on trees, operators on trees,
weighted summation operators, covering numbers, entropy numbers.

\baselineskip=6.0mm

\maketitle
\newpage

\section{Introduction}

Let $T$ be a tree with partial order structure $"\pq"$, i.e., one has $t\pq s$ whenever $t$
lies on the way leading from the root of $T$ to $s$. Suppose we are given two weight functions
$\al,\s :  T \mapsto (0,\infty)$ satisfying
\be
\label{bound}
\kappa:=\sup_{s\in T} \left(\sum_{v\pq s}\al(v)^q\right)^{1/q}\s(v)<\infty
\ee
for some $q\ge 1$. Then the weighted summation operator $V_{\al,\s}$ is well--defined by
\be
\label{Vals}
(V_{\al,\s}\mu)(t):= \al(t)\sum_{s\qp t}\s(s)\mu(s)\;,\quad t\in T\,,
\ee
for $\mu\in \ell_1(T)$ and it is bounded as operator from $\ell_1(T)$ to $\ell_q(T)$
with $\norm{V}\le \kappa$.

Our aim is to describe compactness properties of $V_{\al,\s}$. This turns out
to be a challenging problem because those properties of $V_{\al,\s}$ do not
only depend on $\al$, $\s$ and $q$ but also on the structure of the underlying tree. The
motivation for the investigation of those questions stems from \cite{Lif10} where compactness
properties of certain Volterra integral operators were studied; for the latter subject, see also \cite{Lin}.

The basic observation in \cite{LifLin10} is as follows: The weights $\al$ and $\s$ generate
in natural way a metric $d$ on $T$ and covering properties of $T$ by $d$--balls are tightly related
with the degree of compactness of $V_{\al,\s}$. To be more precise, for $t\pq s$ we define their
distance as
\be
\label{metric}
d(t,s):=\max_{t\pc v \pq s} \left(\sum_{t\pc \tau\pq v}\al(\tau)^q\right)^{1/q}\s(v)\;.
\ee
If $\s$ is non--increasing, i.e., $t\pq s$ implies $\s(t)\ge \s(s)$, then, as shown in
\cite{LifLin10}, the distance $d$ extends to a metric on the whole tree $T$.

Let $N(T,d,\eps)$ be the {\it covering numbers} of
$(T,d)$, i.e.,
$$
N(T,d,\eps):=\inf\set{n\ge 1 : T=\bigcup_{j=1}^n U_\eps(t_j)}
$$
with (open) $\eps$--balls $U_\eps(t_j)$ for certain $t_j\in T$
and let $e_n(V_{\al,\sigma})$ be the sequence of {\it dyadic entropy numbers} of $V_{\al,\s}$
defined as follows: If $X$ and $Y$ are Banach spaces and $V: X\mapsto Y$ is an operator,
then the $n$--th entropy number of $V$ is given by
$$
e_n(V):=\inf\left\{\eps>0 : \set{V(x) : \norm{x}_X\le 1}\;
           \mbox{is covered by at most}\;
           2^{n-1}\;\mbox{open}\; \eps\mbox{--balls in}\;Y\right\}\;.
$$
The operator $V$ is compact if and only if $e_n(V)\to 0$ as $n\to\infty$. Thus the
behavior of $e_n(V)$ as $n\to 0$ may be viewed as measure for the degree of
compactness of $V$.
We refer to \cite{CS} or \cite{ET} for more information about entropy numbers and
their properties.

One of the main results in \cite{LifLin10} asserts the following where, for
simplicity, we only formulate it for $1<q\le 2$ .\\
Suppose that
$$
N(T,d,\eps)\le c\,\eps^{-a}\abs{\log\eps}^b
$$
for some $a>0$ and $b \ge 0$. Then this implies
$$
e_n(V_{\al,\s} : \ell_1(T)\mapsto\ell_q(T))\le c'\,n^{-1/a-1/q}(\log n)^{b/a}\;.
$$
Hereby the order of the right--hand side is not improvable.

One may ask whether or not a similar relation between $N(T,d,\eps)$ and $e_n(V_{\al,\s})$
remains valid in the (probably more interesting) case that $N(T,d,\eps)$ tends  to infinity exponentially
as $\eps\to 0$. In
\cite{LifLin10} we were able to provide a partial answer to this question: some interesting critical case
remained unsolved. More precisely, we could prove the following (again we only formulate the result for $1<q\le 2$):\\
Assume
$$
\log N(T,d,\eps)\le c\,\eps^{-a}
$$
for some constant $c>0$ and some $a>0$. Then this implies
\be
\label{noncr1}
e_n(V_{\al,\s} : \ell_1(T)\mapsto\ell_q(T))\le c'\,n^{-1/q'}(\log n)^{1/q'-1/a}
\ee
provided that $a<q'$ while for $a>q'$ we have
\be
\label {noncr2}
e_n(V_{\al,\s} : \ell_1(T)\mapsto\ell_q(T))\le c'\,n^{-1/a}\;.
\ee
Here and in the sequel $q'$ denotes the conjugate of $q$ defined by $1/q'=1\,-1/q$. Again, both estimates
are of the best possible order.

The most interesting critical case $a=q'$ remained open. Here our methods only led to
\be
\label{extralog}
e_n(V_{\al,\s} : \ell_1(T)\mapsto\ell_q(T))\le c'\,n^{-1/a}(\log n)\;.
\ee
In view of the available lower estimates and suggested by the results in \cite{Lif10}, where a special
but representative case (binary trees, $q=2$ and
some special weights) was handled,  we conjectured that the logarithm on the right hand side of (\ref{extralog}) is
unnecessary.

The main aim of the present paper is to verify this conjecture. We shall prove the following.

\begin{thm}
\label{main}
Let $T$ be an arbitrary tree and let $\al,\s$ be weights on $T$ satisfying $(\ref{bound})$ for
some $q\in(1,2]$. Furthermore, assume $\s$ to be non--decreasing and let $d$ be the metric
on $T$ defined via $(\ref{metric})$.
Suppose
\be
\label{Nbound}
\log N(T,d,\eps)\le c_0\,\eps^{-q'}
\ee
with some $c_0>0$. Then it follows
$$
e_n(V_{\al,\s} : \ell_1(T)\mapsto \ell_q(T))\le c\,c_0^{1-\,1/q}\,n^{-\QQ}
$$
for some $c=c(q)$ independent of $\al$, $\s$ and $T$.
\end{thm}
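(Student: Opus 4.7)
My plan is to extend the chaining framework of \cite{LifLin10} by replacing its last step with the sharper joint approximation argument introduced in \cite{Lif10}.

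First, I would apply (\ref{Nbound}) to produce for each $k \ge 0$ an $\varepsilon_k$-net $T_k \subset T$ of cardinality at most $N_k := \exp(c_0\varepsilon_k^{-q'})$, with $\varepsilon_k := 2^{-k}$, and arrange nested projections $\pi_k:T\to T_k$ so that $\pi_k\circ\pi_{k+1}=\pi_k$. Using the relation between $d$ and $V$ from \cite{LifLin10}, one gets $\|V\delta_s - V\delta_{\pi_k(s)}\|_q \lesssim \varepsilon_k$, and hence the telescoping decomposition $V\mu = \sum_s\mu(s) V\delta_{\pi_0(s)} + \sum_{k\ge 0} W_k\mu$ with $W_k\mu := \sum_s \mu(s)(V\delta_{\pi_{k+1}(s)} - V\delta_{\pi_k(s)})$. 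This reduces the task to estimating the entropy numbers of each individual piece $W_k$ and of the initial finite-dimensional level $0$.

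Second, for a single level $k$ the image $W_k(B_{\ell_1(T)})$ lies in the absolutely convex hull of at most $N_{k+1}$ vectors of $\ell_q$-norm $\le C\varepsilon_k$. Since $\ell_q$ has type $q$ for $q\in(1,2]$, the Carl--Maurey / Bourgain--Lindenstrauss--Milman inequality yields
\[
e_{n_k}(W_k) \;\le\; C\varepsilon_k (\log N_{k+1}/n_k)^{1/q'} \;=\; C\,c_0^{1/q'}\,n_k^{-1/q'}
\]
whenever $n_k \ge \log N_{k+1}$. The scale cancellation $\varepsilon_k\cdot(\log N_{k+1})^{1/q'}\sim 1$ is the hallmark of the critical case $a=q'$: naive summation over the $K\sim\log n$ active levels yields precisely the extra $\log n$ factor of (\ref{extralog}) that we wish to remove.

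The decisive new idea, adapted from \cite{Lif10}, is to avoid independent level-by-level approximations and instead build a single joint empirical approximation for the entire telescoping sum $\sum_{k \ge 0} W_k$. Concretely, one draws a random sample of total size $n$ from a carefully weighted mixture of distributions on the combined index set $\bigcup_{k\ge 0}\{(u,\pi_k(u)):u\in T_{k+1}\}$, and shows via a single type-$q$ Rademacher process bound that its empirical mean approximates $V\mu$ uniformly for $\mu\in B_{\ell_1(T)}$ with error $c_0^{1/q'}n^{-1/q'}$. The gain over naive chaining arises because the contributions at different scales are orthogonal along the tree (disjoint children beneath each ancestor), so that their second moments combine in a single $O(1)$ geometric sum instead of accumulating a factor $K\sim\log n$. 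The main obstacle I expect is the uniformity of this empirical approximation over the whole $\ell_1$-ball, which will require a careful exploitation of the partial order on $T$ and the nesting of the $\pi_k$ to organize the supports hierarchically; once this is done, a truncation at level $K$ with $2^{-K}\lesssim n^{-1/q'}$ concludes the argument with the stated dependence $c_0^{1-1/q}$ on the covering constant.
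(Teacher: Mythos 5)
Your first two steps (chaining over nets at scales $\eps_k=2^{-k}$ and the Carl--Maurey bound per level, with the critical cancellation $\eps_k(\log N_{k+1})^{1/q'}\sim c_0^{1/q'}$) reproduce what was already known and lead exactly to the extra $\log n$ of (\ref{extralog}); the theorem lives or dies in your ``decisive new idea'', and as stated that step has a genuine gap. First, the claimed orthogonality across scales is not there: an increment $V\delta_{\pi_{k+1}(s)}-V\delta_{\pi_k(s)}$ is supported on the whole initial segment $[\ro,\pi_{k+1}(s)]$ (the two terms do not cancel on $[\ro,\pi_k(s)]$ unless $\s$ takes the same value at the two net points), so increments at different scales along one branch overlap heavily near the root; disjointness of supports only becomes available after the localization $V\mapsto W$ of (\ref{defW}) and within a single block $I_k$ of the partition generated by $\s$ --- this is precisely why the paper performs the second reduction and invokes Proposition \ref{red2}, and it also undercuts your claim that $\norm{V\delta_s-V\delta_{\pi_k(s)}}_q\lesssim\eps_k$ follows directly from the definition of $d$. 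Moreover, in $\ell_q$ with $q<2$ one only has type $q$, so ``second moments combine in a geometric sum'' has no force. More fundamentally, asking a single (random) empirical approximation to work \emph{uniformly over the whole unit ball of $\ell_1(T)$} is nothing but a restatement of the entropy problem for the absolutely convex hull of $\set{V\delta_s : s\in T}$, and in the critical case $a=q'$ the generic convex-hull/Maurey estimates in type $q$ spaces carry an extra logarithm that in general cannot be removed (Gao \cite{Gao}, Creutzig--Steinwart \cite{CrSt}). So the ``main obstacle'' you defer --- uniformity over the $\ell_1$-ball --- is not a technical point to be fixed later; it is the entire content of the theorem, and your sketch uses the tree only in ways a generic convex-hull argument already sees.

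The mechanism that actually closes the gap (and which is the real content of the method of \cite{Lif10}) is deterministic, not probabilistic: one approximates $W$ by a \emph{family} of operators indexed by the element $\mu$ itself, using Proposition \ref{apprV}, which requires controlling simultaneously the number of approximants and their entropy numbers. Concretely, each $\mu$ with $\norm{\mu}_1\le 1$ determines a partition of $T$ into heavy and light domains taken from a fixed hierarchy of tree partitions $(\B_m)$; the number of resulting partitions is at most $(8\ex)^n$ (Lemmas \ref{lb1}, \ref{lb2}), the light pieces give the pointwise error $c\,n^{-(1-1/q)}$ (Proposition \ref{pestW4}), and the remaining part is split into three operators handled by three different tools: a dimension-based bound of Carl using $\#\thd\le n$ heavy domains, a \emph{non-critical} convex-hull bound made possible only by the extra minimality property (4) of the nets, which upgrades the generic estimate $\norm{x_L}_q\le\eps_{\abs{L}}\sim\abs{L}^{-1/q'}$ to the crucial $\norm{x_L}_q\le c\,\abs{L}^{-1}$ of Proposition \ref{crucial}, and K\"uhn's theorem on critical diagonal operators. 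Your plan contains no substitute for any of these ingredients --- no element-dependent approximants with cardinality control, no analogue of the minimality property, no splitting isolating the critical diagonal part --- and without them the sampling argument collapses back onto the generic convex-hull estimate, i.e.\ back onto (\ref{extralog}).
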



As an illustration, let us show how the main result of \cite{Lif10} appears to be
a direct corollary of Theorem \ref{main}.

\begin{cor}
Let $T$ be an infinite binary tree and let the weights on $T$ be defined by
\[
    \s(t)=1,\  \al(t)=(|t|+1)^{-1},\qquad t\in T,
\]
where $|\cdot|$ denotes the order of an element in the tree (cf.~$(\ref{deforder})$ below).
 Then there exists a finite positive $c$ such that for all positive integers $n$
 we have
$$
   e_n(V_{\al,\s} : \ell_1(T)\mapsto \ell_2(T))\le c \, n^{-1/2}.
$$
\end{cor}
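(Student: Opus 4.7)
The plan is to apply Theorem~\ref{main} with $q=2$; the only step of substance is verifying the covering hypothesis $(\ref{Nbound})$. Since $\s\equiv 1$, the monotonicity assumption is automatic, and $(\ref{bound})$ reduces to
\[
\sup_{s\in T}\Bigl(\sum_{v\pq s}(|v|+1)^{-2}\Bigr)^{1/2}\;\le\;\Bigl(\sum_{k\ge 0}(k+1)^{-2}\Bigr)^{1/2}\;<\;\infty,
\]
so the standing hypotheses on $\al,\s$ are in force and $V_{\al,\s}$ is bounded.

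For comparable vertices $t\pc s$ the sum in $(\ref{metric})$ is increasing in the upper endpoint $v$, hence the maximum is attained at $v=s$; a comparison with the integral $\int x^{-2}\,\mathrm{d}x$ then gives
\[
d(t,s)^{2}\;=\;\sum_{|t|<j\le|s|}(j+1)^{-2}\;\le\;\frac{1}{|t|+1}.
\]
Combining this with the triangle inequality through the nearest common ancestor yields the key geometric fact that for every $t\in T$ the subtree $T_{t}:=\{s\in T:s\qp t\}$ has $d$-diameter at most $2/\sqrt{|t|+1}$.

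With this in hand the covering estimate is immediate. I would pick a level $k_{0}:=\lceil C\,\eps^{-2}\rceil$ for a fixed universal $C$ large enough that $2/\sqrt{k_{0}+1}<\eps$; then each of the $2^{k_{0}}$ vertices at level $k_{0}$ roots a subtree that fits inside a single open $d$-ball of radius $\eps$, while the remaining at most $2^{k_{0}}-1$ vertices of strictly smaller level are covered one by one. This yields $N(T,d,\eps)\le 2\cdot 2^{k_{0}}$, hence $\log N(T,d,\eps)\le c_{0}\,\eps^{-2}$ for a universal $c_{0}$; this is exactly $(\ref{Nbound})$ with $q'=2$. Applying Theorem~\ref{main} with $q=2$ (so $1-1/q=1/2$) then produces $e_{n}(V_{\al,\s}:\ell_{1}(T)\to\ell_{2}(T))\le c\,c_{0}^{1/2}\,n^{-1/2}$, which is the claimed bound. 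There is essentially no real obstacle here: the exponential explosion of the binary branching is exactly balanced by the $1/\sqrt{|t|+1}$ collapse of the $d$-diameter of the subtree $T_{t}$, and the critical exponent $q'=2$ falls out by design.
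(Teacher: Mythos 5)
Your proposal is correct and follows essentially the same route as the paper: cover $T$ by the finitely many vertices of levels $\le k$ (your level-$k_0$ roots plus the shallower vertices are exactly this set), bound the distance to the deepest ancestor among the centers by the tail sum $\sum_{j>k}(j+1)^{-2}\le (k+1)^{-1}$, conclude $\log N(T,d,\eps)\le c_0\,\eps^{-2}$, and invoke Theorem \ref{main} with $q=2$. The detour through subtree diameters and the explicit check of (\ref{bound}) are harmless cosmetic additions.
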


\begin{proof}
Take any positive integer $k$ and let $T_k$ be the union of the levels less or equal than $k$,
i.e., $T_k:=\set{t\in T : |t|\le k}$.
Then
\[
  \# T_k = \sum_{j=0}^k 2^j < 2^{k+1}.
\]
On the other hand, it is obvious that (with $q=2$ in the definition of the metric $d$)
\[
   \sup_{s\in T} \inf_{t\in T_k} d(t,s)^2 < \sum_{j=k+1}^\infty (j+1)^{-2} \le (k+1)^{-1}.
\]
It follows that $\log N(T,d,(k+1)^{-1/2})\le (k+1)\cdot \log 2$, thus Theorem \ref{main} applies
with $q=2$ and yields the assertion of the corollary.
\end{proof}
\medskip

Let us shortly explain why the methods of \cite{LifLin10} are \textit{not} appropriate for
the proof of Theorem \ref{main} and why, therefore,  a completely new approach is needed.
A basic step in the proof of (\ref{noncr1}) and (\ref{noncr2}) is an estimate
for the entropy of the convex hull of a certain subset of $\ell_q(T)$. Estimates of
this type are well--known (see e.g.~\cite{CKP}
or \cite{CrSt}). But here a critical case appears which exactly corresponds to $a=q'$ in our
situation. As shown  for $q=2$ in \cite{Gao} and  for type $q$--spaces in \cite{CrSt}, in
this critical case estimates of
the entropy of convex hulls give an extra
$\log$--term which, in general, cannot be avoided. Thus, in order to prove Theorem \ref{main}
one has to show that such an extra $\log$--term does not appear for the entropy of convex hulls
provided the investigated sets are related to weighted summation operators on trees. This demands
a completely new approach which was for the first time used in \cite{Lif10} and which we elaborate
here further. The basic idea is to approximate an operator  defined on a Banach space $X$ (in
our situation we have $X=\ell_1(T)$ ) by a family
of operators depending on the elements in $X$. To this end one has to control at the same time
the entropy numbers
of the approximating operators as well as the number of those operators.

\section{Trees}
\setcounter{equation}{0}

Let us recall some basic notations related to trees which will be used later on.
In the sequel $T$ always denotes a finite or an infinite
tree. We suppose that $T$ has a unique root which we denote by
$\ro$ and that each element $t\in T$ has a finite number
of offsprings. Thereby we do not exclude that some elements do not possess any offspring, i.e.,
the progeny of some elements may "die out". The tree structure leads in natural way
to a partial order $,\!,\pq "$ by letting $t\pq s$, respectively $s\qp t$,  provided there are
$t=t_0, t_1,\ldots, t_m=s$ in $T$ such that for $1\le j\le m$ the
element $t_j$ is an offspring of $t_{j-1}$.
The strict inequalities have the same meaning with the additional assumption $t\not=s$.
Two elements $t,s\in T$ are said to be {\it comparable} provided that either $t\pq s$ or $s\pq t$.

Given $t\in T$ with $t\not=\ro$ we denote by $t^-$ the (unique) parent element of $t$,
i.e., $t$ is supposed to be an offspring of $t^-$.

For $t,s\in T$ with $t\pq s$ the order interval $[t,s]$ is defined by
$$
[t,s]:= \set{ v\in T : t\pq v\pq s}
$$
and in a similar way we construct $(t,s]$ or $(t,s)$ .

A subset $B\subseteq T$ is said to be a {\it branch} provided that all elements in $B$ are
comparable and, moreover, if $t\pq v\pq s$ with $t,s\in B$, then this implies $v\in B$
as well. Of course, finite branches are of the form $[t,s]$ for suitable $t\pq s$.

A set $B\subseteq T$ is called a {\it tree} provided it is a tree w.r.t.~the structure of $T$;
in particular,
if $r\in B$ is its root, then for each $s\in B$ with $s\qp r$ it holds $[r,s]\subseteq B$.
If $\ro$ is the root of the tree, then $B$ is called a \textit{subtree} of $T$. Given a tree $B\subseteq T$
an element $t\in B$ is said to be {\it terminal} provided that $s\notin B$ for all offsprings
$s$ of $T$.

Finally, for any $s\in T$ its {\it order} $|s|\ge 0$ is defined by
\be
\label{deforder}
|s|:=\#\set{t\in T :  t \pc s}\;.
\ee

\section{Reduction of the Problem}
\setcounter{equation}{0}

An easy scaling argument shows that we may assume that estimate (\ref{Nbound}) holds with $c_0=1$.
Another quantity that naturally appears in our bounds is $\kappa$ defined in (\ref{bound}). Notice
that for any $\eps>\kappa$ we have $N(T,d,\eps)\ge 2$. Therefore, (\ref{Nbound}) yields
$\kappa \le \left(\frac{c_0}{\log 2}\right)^{1-\,1/q}$, hence the scaling of $c_0$ implies that
$\kappa>0$ is uniformly bounded. After this scaling is done, all the constants appearing in
the proof of Theorem \ref{main} only depend on $q$. We will denote them
by $c$ without further distinction.
\medskip

\noindent
\textit{First reduction}:
A first important simplification of the problem is as follows:\\
Without losing generality we may assume that $\s$ attains only values in
$\set{2^{-k} : k\in \Z}$. Although this has been proved in \cite{LifLin10}, let
us shortly repeat the argument. Set
$$
I_k:=\set{t\in T : 2^{-k-1}<\s(t)\le 2^{-k}}
$$
and define a new weight $\hat \s$ by
$$
\hat \s:= \sum_{k\in\Z} 2^{-k}\on_{I_k}\;.
$$
Then $e_n(V_{\al,\hat \s})\le e_n(V_{\al,\s})$ and if $\hat d$ denotes the metric on $T$
defined via (\ref{metric})  by $\al$ and $\hat \s$ instead of by $\al$ and $\s$,
then $N(T,\hat d,\eps)\le N(T,d,2\eps)$.
Moreover, if $\s$ is non-increasing,  then $\hat \s$ also has this property.
Clearly, this shows that it suffices to prove Theorem \ref{main} for weights $\s$ only
attaining values in $\set{2^{-k} : k\in \Z}$.
\medskip

\noindent
\textit{Second reduction}: Suppose that $\s$ is of the special form
\be
\label{red}
\s=\sum_{k\in \Z} 2^{-k}\on_{I_k}
\ee
for certain disjoint $I_k\subseteq T$, $k\in\Z$. Since $\s$ is assumed to be non--decreasing, the
collection
$\II:=(I_k)_{k\in\Z}$ of subsets in $T$ possesses the following properties:
\bee
\item
The $I_k$ are disjoint and $T=\bigcup_{k\in\Z} I_k$, i.e., $\II$ is a partition of $T$ .
\item
For each $s\in T$ the set $I_k\cap [\ro,s]$ is either empty or a branch. Moreover, the
$I_k$ are ordered in the right way, i.e., if $l<k$
and $v_l\in I_l\cap [\ro,s]$ and $v_k\in I_k\cap[\ro,s]$, then this implies $v_l\pc v_k$.
\item
Since $\s$ is bounded, of course, it follows that $I_k=\emptyset$ whenever $k\le k_0$ for a certain $k_0\in\Z$.
\eee

Using this partition $\II$  of $T$ we construct an operator $W:\ell_1(T)\mapsto \ell_q(T)$
which may be viewed as a localization of $V_{\al,\s}$.  It is defined as follows:
If $\mu\in\ell_1(T)$, then
\be
\label{defW}
(W\mu) (t):= \al(t)\sum_{s\qp t\atop{s\in I_k}}\s(s)\mu(s)= \al(t)\,2^{-k}\sum_{s\qp t \atop{s\in I_k}} \mu(s),
\qquad t\in I_k\ .
\ee
Note that for each $k\in\Z$ and $t\in I_k$ the value of $(W\mu)(t)$ depends only on the values of
$\mu(s)$ for $s\qp t$ and $s\in I_k$. This is in complete contrast to the definition of $V_{\al,\s}$ because here
the value of $(V_{\al,\s}\mu)(t)$ depends on the values of $\mu(s)$ for \textit{all} $s\qp t$. Nevertheless, as shown in
\cite[Proposition 4.3]{LifLin10} the following is valid.

\begin{prop}
\label{red2}
It holds
$$
e_n(V_{\al,\s}:\ell_1(T)\mapsto\ell_q(T))\le 2\,e_n(W : \ell_1(T)\mapsto \ell_q(T))\;.
$$
\end{prop}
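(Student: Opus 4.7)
My plan is to establish a factorisation $V_{\al,\s}=(I+F)W$ as operators from $\ell_1(T)$ to $\ell_q(T)$, for a suitably defined linear map $F$ on $\ell_q(T)$, and then transfer covers from $W$ to $V_{\al,\s}$ through this factorisation. The identity would be derived as follows: for $t\in I_k$, the basic splitting
\[
V_{\al,\s}\mu(t)-W\mu(t) \;=\; \al(t)\!\sum_{s\qp t,\,s\notin I_k}\s(s)\mu(s),
\]
combined with M\"obius inversion inside each $I_k$-component of $T$ (after the second reduction $\s$ is constant on a component, so $W$ restricted to a component acts as an invertible weighted summation on the sub-tree), rewrites the right-hand side as $\al(t)\sum_{s\in T^\circ,\,s\qc t}W\mu(s)/\al(s)$, where $T^\circ:=\{v\in T : v=\ro \text{ or } v^-\notin I_{k(v)}\}$ is the set of component tops. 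In operator form, $V_{\al,\s}=(I+F)W$ with $(F\psi)(t):=\al(t)\sum_{s\in T^\circ,\,s\qc t}\psi(s)/\al(s)$.

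Given an $\eps$-net $\{w_i\}_{i=1}^{N}$ of size $N\le 2^{n-1}$ for $W(B_{\ell_1(T)})$ in $\ell_q(T)$, the candidate $2\eps$-net for $V_{\al,\s}(B_{\ell_1(T)})$ is $\{w_i+Fw_i\}$: for $\mu\in B_{\ell_1(T)}$ and the closest $w_i$ to $W\mu$,
\[
\norm{V_{\al,\s}\mu-(w_i+Fw_i)}_q \;=\; \norm{(W\mu-w_i)+F(W\mu-w_i)}_q \;\le\;(1+\norm{F}_\star)\,\norm{W\mu-w_i}_q,
\]
where $\norm{F}_\star$ denotes the operator norm of $F$ restricted to the closed linear span of $W(B_{\ell_1(T)})-W(B_{\ell_1(T)})$. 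If $\norm{F}_\star\le 1$, this produces a $2\eps$-net of the same cardinality, yielding $e_n(V_{\al,\s})\le 2e_n(W)$.

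The main obstacle is the bound $\norm{F}_\star\le 1$. Viewed on all of $\ell_q(T)$, the operator $F$ is in general unbounded (a point mass at a deep $v\in T^\circ$ with small $\al(v)$ blows up whenever $\al(t)/\al(v)$ is large on the ancestral chain of $t$'s $I_k$-component), so the restriction to the image of $W$ is essential. The quantitative bound should emerge from a telescoping computation along the levels of the partition $\II$, using (\ref{bound}) in the normalised form $\kappa\lesssim 1$ together with the range $1<q\le 2$ and the recursive structure of vectors in $W(B_{\ell_1(T)})$. Verifying this restricted norm bound with the clean constant $1$ is the technical heart of the proof; a weaker estimate $\norm{F}_\star\le C$ would still give $e_n(V_{\al,\s})\le(1+C)e_n(W)$, but the advertised factor $2$ requires squeezing out exactly $C=1$.
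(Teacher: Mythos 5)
Your factorisation identity $V_{\al,\s}=(I+F)W$ is correct (each $s\qp t$ with $s\not\equiv t$ is picked up exactly once through the top $\lambda(s)$ of its $I_{k}$--piece), but the step you yourself single out as the technical heart is not merely unproven: it is false, and with it the whole range--side strategy collapses. Even the weaker claim $\norm{F}_\star\le C$ with $C$ independent of $T,\al,\s$ fails. Take the two--point chain $\ro\pc t_1$ with $\s(\ro)=1$, $\s(t_1)=\tfrac12$ (so $I_0=\{\ro\}$, $I_1=\{t_1\}$, $T^\circ=\{\ro,t_1\}$) and $\al(\ro)=1$, $\al(t_1)=a$ small. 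For $\mu=\delta_{t_1}$ one has $W\mu=\tfrac a2\,\delta_{t_1}$, hence $\norm{W\mu}_q=\tfrac a2$, while $F(W\mu)(\ro)=(V_{\al,\s}\mu-W\mu)(\ro)=\tfrac12$; since $\delta_{t_1}$ lies in the span of the image of $W$, this gives $\norm{F}_\star\ge a^{-1}$, arbitrarily large. The same example shows that no post--composition trick can work at all: the inputs $\mu=0$ and $\mu=\delta_{t_1}$ satisfy $\norm{W\mu-W\mu'}_q=\tfrac a2$ but $\norm{V_{\al,\s}\mu-V_{\al,\s}\mu'}_q\ge\tfrac12$, so $V_{\al,\s}\mu$ is not a Lipschitz (uniformly in the data) function of $W\mu$, and an $\eps$--net for $W(B_{\ell_1(T)})$ cannot be converted into a $c\,\eps$--net for $V_{\al,\s}(B_{\ell_1(T)})$ by any map applied on the range side. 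The reason is structural: $W$ attenuates the deep coordinates by the small factors $\al(s)\s(s)$, whereas $V_{\al,\s}$ re--amplifies exactly this lost information at shallow coordinates through $\al(t)/\al(s)$. The only bound actually available is $\norm{F(W\nu)}_q=\norm{(V_{\al,\s}-W)\nu}_q\le(\norm{V_{\al,\s}}+\norm{W})\norm{\nu}_1$, which in your net argument produces an additive error of order $\kappa$, not of order $\eps$.

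The paper does not prove Proposition \ref{red2} here but quotes it from \cite[Proposition 4.3]{LifLin10}, and the workable mechanism is a factorisation on the \emph{domain} side, through $\ell_1$, where the lost mass can be redistributed before $W$ is applied. Writing $k(u)$ for the index with $u\in I_{k(u)}$, define $S:\ell_1(T)\to\ell_1(T)$ by
$$
(S\mu)(s):=\sum_{u\qp s,\ (s,u]\cap I_{k(s)}=\emptyset} 2^{-(k(u)-k(s))}\,\mu(u)\,,
$$
i.e.\ each $u$ sends its mass, geometrically discounted, to the last point of each lower level $I_j$ on the branch $[\ro,u]$. For $t\in I_k$ and $u\qp t$ there is exactly one $s\in I_k\cap[t,u]$ with $(s,u]\cap I_k=\emptyset$ (the maximal element of the branch piece $I_k\cap[t,u]$), whence $W(S\mu)(t)=\al(t)2^{-k}\sum_{s\qp t,\,s\in I_k}(S\mu)(s)=\al(t)\sum_{u\qp t}2^{-k(u)}\mu(u)=V_{\al,\s}\mu(t)$, so $V_{\al,\s}=W\circ S$; and since on each branch $[\ro,u]$ at most one point per level $j\le k(u)$ receives mass from $u$, the column sums give $\norm{S}\le\sum_{i\ge0}2^{-i}=2$. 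The multiplicativity of entropy numbers then yields $e_n(V_{\al,\s})\le\norm{S}\,e_n(W)\le 2\,e_n(W)$ directly — the factor $2$ comes from the geometric series, not from any restricted norm bound on the range side.
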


As a consequence we see that it suffices to estimate the entropy numbers of $W$ suitably.
\medskip

\noindent
\textit{Third reduction}: Given $\eps>0$ a set $S\subset T$ is said to be an $\eps$--order net
provided that for each $t\in T$ there is an $s\in S$ with $s\pq t$ and $d(s,t)<\eps$. Let
$$
\tilde N(T,d,\eps):=\inf\set{\# S : S\;\mbox{is
an}\:\eps\mbox{--order net of}\;T}
$$
be the corresponding order covering numbers. Clearly, we have
$$
N(T,d,\eps)\le \tilde N(T,d,\eps)\;.
$$
But, surprisingly, also a reverse estimate holds as shown in \cite[Proposition 3.3]{LifLin10}.
More precisely, we always have
$$
\tilde N(T,d,2\eps)\le N(T,d,\eps)\;.
$$
\medskip

Summing up it follows that it suffices to prove the following variant of Theorem \ref{main}:

\begin{thm}
\label{main1}
Suppose that $\s$ is non--increasing and of the special form $(\ref{red})$. Define $W$ as in $(\ref{defW})$
with respect to the partition $\II$ generated by $\s$. If
$$
\log \tilde N(T,d,\eps)\le \eps^{-q'}
$$
for some $q\in (1,2]$, then this implies
$$
e_n(W: \ell_1(T)\mapsto \ell_q(T))\le c\,n^{-\QQ}\;.
$$
\end{thm}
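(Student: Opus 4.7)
The strategy I would follow is the one sketched in the introduction: rather than bound the entropy of $W(B_{\ell_1(T)})$ via a general convex-hull estimate (which, in the critical case $a=q'$, necessarily loses a $\log$ in a type-$q$ space), I would construct a finite family $\{W^\alpha\}_{\alpha\in A}$ of simpler operators indexed by a parameter $\alpha=\alpha(\mu)$ that itself depends on $\mu$, such that $W^{\alpha(\mu)}\mu$ is close to $W\mu$ in $\ell_q(T)$ for every $\mu$ in the unit ball. The bound $e_n(W)\le c\,n^{-(1-1/q)}$ then follows by trading the cardinality $\log|A|$ against the entropy budget $n$ at precision $\eps\simeq n^{-(1-1/q)}$.

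Concretely, I would choose a geometric sequence $\eps_j=2^{-j/q'}$ and, using the hypothesis together with the order-net formulation from the third reduction, construct nested order nets $S_0\subseteq S_1\subseteq\cdots\subseteq T$ with $\log|S_j|\le 2^{j}$. For each $t\in T$ and each $j$ pick $\pi_j(t)\in S_j$ with $\pi_j(t)\pq t$ and $d(\pi_j(t),t)<\eps_j$, obtaining chains $\pi_0(t)\pq\pi_1(t)\pq\cdots\pq t$. This induces a scale decomposition $W=\sum_{j\ge 0}\Delta_jW$, where $\Delta_jW\mu$ collects the contributions of atoms $s$ sitting in the scale-$j$ block $(\pi_{j-1}(t),\pi_j(t)]$. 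The constancy of $\s$ on each $I_k$, combined with the fact that along a single $I_k$ the metric $d$ directly governs the $\ell_q$-norm of the corresponding building block, yields a clean bound $\|\Delta_jW:\ell_1(T)\to\ell_q(T)\|\le c\,\eps_{j-1}$.

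For each $\mu\in B_{\ell_1(T)}$ and each scale $j$, I would then apply a Maurey-type empirical-measure argument in the type-$q$ space $\ell_q(T)$: approximate $\Delta_jW\mu$ by an average of $m_j$ atoms of the form $W\delta_{s}$, with $s$ chosen from the cluster representatives of $S_j$ compatible with the chain fixed at level $j-1$. The approximating operator $W^\alpha$ is thereby determined by the discrete array $\alpha=(s_i^{(j)})_{j,i}$. The scale-$j$ error is of order $m_j^{-1/q'}\eps_{j-1}$ in $\ell_q$, while the number of admissible parameters satisfies $\log|A|\le\sum_j m_j\log|S_j|\le\sum_j m_j\,2^{j}$. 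Balancing the $m_j$ so that the scale errors telescope to $\eps$ while the total log-count stays below $n-1$ yields the target bound $e_n(W)\le c\,n^{-(1-1/q)}$.

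The main technical difficulty is to prevent the Maurey step from producing a $\log$. For an abstract subset of $\ell_q$ the critical matching $m\sim\eps^{-q'}$ against $\log|A|\sim\eps^{-q'}$ inevitably creates the spurious logarithm appearing in~\cite{CrSt,Gao}. What rescues us here is that the atoms $W\delta_s$ are not arbitrary but organized by the order structure of $T$: once the scale-$(j-1)$ choice is fixed, the scale-$j$ sampling is restricted to a much smaller sub-cluster, so the effective per-scale counting cost is considerably smaller than $|S_j|^{m_j}$. Carrying out this hierarchical counting carefully, and verifying that the scale errors really do sum to a single $\eps$ without extra logarithmic loss, is the crux of the proof and is what distinguishes the present argument from the one in~\cite{LifLin10}.
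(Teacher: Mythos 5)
Your global framework is indeed the paper's: a finite, $\mu$-dependent family of approximating operators whose log-cardinality is traded against the entropy budget (this is Proposition \ref{apprV}). But the concrete mechanism you propose --- multiscale Maurey/empirical sampling over nested order nets --- breaks down at exactly the point you yourself label ``the crux'', and no argument is offered there. Quantitatively: with $\eps_j=2^{-j/q'}$ you have per-sample cost $\log\#S_j\simeq 2^j$, the relevant scales run up to $J$ with $2^J\simeq n$, and forcing the scale-$j$ sampling errors $m_j^{-1/q'}\eps_{j-1}$ to sum to $\eps\simeq n^{-1/q'}$ over these $J\simeq\log_2 n$ scales forces $\sum_j m_j\,2^j\gtrsim n\log n$, not $\lesssim n$. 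The hierarchical restriction you invoke cannot repair this: a scale-$(j-1)$ cluster may still contain of order $\exp(2^{j-1})$ scale-$j$ clusters, so conditioning on the previous scale saves at most a constant factor per scale, while a factor of order $\log n$ is needed. This is precisely the critical-case obstruction of \cite{Gao} and \cite{CrSt}, and asserting that the tree order ``considerably'' reduces the counting cost does not remove it; indeed the concluding remarks of the present paper point out that with only the net-radius bound (which is all your decomposition provides, namely $\eps_{|L|-1}\simeq |L|^{-\QQ}$ per block) the required critical convex-hull estimate in type $q$ spaces is not known.

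The actual proof circumvents the Maurey step altogether. The $\mu$-dependent parameter is not a sample of atoms but the partition $\LL_\mu$ of $T$ into extremal light domains, defined by the heavy/light threshold $\abs{\mu}(B)>\abs{B}/n$; the number of such partitions is at most $(8\ex)^n$ because they are determined by subtrees of $\BB$ with terminal weight $<n$ (Lemmas \ref{lb1}, \ref{lb2}), and the remainder $W_\LL^4$ is small because of lightness, $\abs{\mu}(L)\le\abs{L}/n$ combined with the $d_\II$-radius $\eps_{|L|}$, not because of sampling error. The entropy of the approximants is then obtained by splitting the branch into \emph{four} pieces and treating each by a different tool: a dimension-based Carl bound using $\#\thd\le n$ for $W^1_\LL$; for $W^2_\LL$ the non-critical convex-hull estimate, which is applicable only because the nets are built with the minimality property (4) of Proposition \ref{construct}, whose consequence (Proposition \ref{crucial}) upgrades the block norms from $\eps_{|L|-1}\simeq|L|^{-\QQ}$ to $c\,|L|^{-1}$; and for $W^3_\LL$ K\"uhn's theorem on critical diagonal operators. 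Your proposal contains neither the heavy/light construction (so the cardinality and the remainder are not simultaneously controlled) nor any substitute for Proposition \ref{crucial}, and these are the ingredients that actually kill the logarithm; as written, the balancing in your scheme reproduces the bound (\ref{extralog}) rather than Theorem \ref{main1}.
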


\section{Proof of Theorem \ref{main1}}
\setcounter{equation}{0}

\label{proof}

We start with explaining the strategy for proving Theorem \ref{main1}. This is done
by a quite general approximation procedure which, to our knowledge,  for
the first time appeared in \cite{Lif10}.

\begin{prop}
\label{apprV}
Let $V$ be a bounded linear operator between the Banach spaces $X$ and $Y$ and let
$\{V_\gamma : \gamma\in\Gamma\}$ be a (finite) collection of operators
from $X$ to $Y$. Setting $M:=[\log_2(\# \Gamma)]+1$, for each $k\ge 1$ it follows that
\be
\label{appr}
e_{k+M}(V)\le \sup_{\gamma\in\Gamma} e_k(V_\gamma)
\ + \
\sup_{\norm{x}_X\le 1}\inf_{\gamma\in \Gamma}
\norm{V x - V_\gamma x}_Y\;.
\ee
\end{prop}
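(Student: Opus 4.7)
The plan is a direct covering argument built around the triangle inequality and the definitions of the quantities involved. Write $\eps_1 := \sup_{\gamma \in \Gamma} e_k(V_\gamma)$ and $\eps_2 := \sup_{\norm{x}_X \le 1} \inf_{\gamma \in \Gamma} \norm{Vx - V_\gamma x}_Y$ for the two terms on the right--hand side of (\ref{appr}), fix an auxiliary $\eta > 0$ that will be sent to zero at the end, and denote by $B_X$ the closed unit ball of $X$.

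First I would exploit the entropy bound for each fixed $\gamma$: by the very definition of $e_k(V_\gamma)$, the image $V_\gamma(B_X)$ admits a covering by at most $2^{k-1}$ open balls of radius $\eps_1 + \eta$, with some centers $\{y_{\gamma,i}\}_{i=1}^{2^{k-1}} \subset Y$. Collecting these centers over all $\gamma \in \Gamma$ yields a candidate net $\mathcal{C} := \{y_{\gamma,i} : \gamma \in \Gamma,\ 1 \le i \le 2^{k-1}\}$ of cardinality at most $\#\Gamma \cdot 2^{k-1} \le 2^M \cdot 2^{k-1} = 2^{k+M-1}$, where the inequality $\#\Gamma \le 2^M$ is precisely what the choice $M = [\log_2(\#\Gamma)] + 1$ arranges.

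Next, for any $x \in B_X$ I would first pick $\gamma = \gamma(x) \in \Gamma$ with $\norm{Vx - V_\gamma x}_Y < \eps_2 + \eta$ (possible by the definition of $\eps_2$ as a sup-inf), and then pick $i = i(x)$ with $\norm{V_\gamma x - y_{\gamma,i}}_Y < \eps_1 + \eta$. The triangle inequality immediately yields $\norm{Vx - y_{\gamma,i}}_Y < \eps_1 + \eps_2 + 2\eta$, so $\mathcal{C}$ is a net of cardinality at most $2^{k+M-1}$ for $V(B_X)$ consisting of centers of open balls of radius $\eps_1 + \eps_2 + 2\eta$. Hence $e_{k+M}(V) \le \eps_1 + \eps_2 + 2\eta$, and letting $\eta \to 0$ gives (\ref{appr}).

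I do not foresee a genuine obstacle here; the whole argument is essentially a bookkeeping exercise. The only delicate point is the budget accounting, namely splitting the $2^{k+M-1}$ balls allowed for $e_{k+M}(V)$ into $\#\Gamma$ groups of $2^{k-1}$ balls, one group per $V_\gamma$; this is exactly what the definition of $M$ is engineered to permit. The coarse approximation step (replacing $V$ by some $V_\gamma$) and the fine entropy step (covering $V_\gamma(B_X)$) are combined additively through the triangle inequality, which also explains why the right--hand side of (\ref{appr}) is a sum rather than, e.g., a maximum.
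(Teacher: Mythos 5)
Your argument is correct and is essentially the standard proof of this approximation lemma: the paper itself does not reprove Proposition \ref{apprV} but imports it from \cite{Lif10}, where the same covering argument is used — cover each $V_\gamma(B_X)$ by $2^{k-1}$ balls of radius slightly above $\sup_\gamma e_k(V_\gamma)$, pool the at most $\#\Gamma\cdot 2^{k-1}\le 2^{k+M-1}$ centers, and combine the approximation step and the entropy step by the triangle inequality. Your bookkeeping of the index shift via $\#\Gamma\le 2^{M}$ and the final limit $\eta\to 0$ are exactly right, so there is nothing to add.
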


How this proposition is applied ? Let $a>0$ and suppose that for each $n\ge 1$ there exist
operators $\{V^n_\gamma : \gamma \in \Gamma_n\}$ from $X$ to $Y$
such that $\log(\# \Gamma_n)\le c_1\,n$ and $e_{[\rho n]}(V^n_\gamma)\le c_2\, n^{-a}$ for some $\rho\ge 1$. If,
furthermore, for each $x\in X$ with $\norm{x}_X\le 1$ there is a $\gamma=\gamma(x)\in\Gamma_n$ with
$$
\norm{V x -V^n_\gamma x}_Y\le c_3\,n^{-a}\;,
$$
then an application of Proposition \ref{apprV} with $k=[\rho n]$ leads to
$e_{c_4\,n}(V)\le c_5\,n^{-a}$ for $n\in\N$, hence by the monotonicity of entropy numbers to
$e_n(V)\le c\,n^{-a}$ for $n\ge 1$.
\medskip

Thus, in order to apply this general approximation scheme to $W$   defined in (\ref{defW}) and with $a=1-\,1/q$,
for each $n\ge 1$ we have to construct a suitable collection $\{W_\gamma^n : \gamma\in\Gamma_n\}$ of operators from $\ell_1(T)$
to $\ell_q(T)$ with $\log(\# \Gamma_n)\le c_1\,n$,
\be
\label{Wgamma}
\inf_{\gamma\in \Gamma_n}\norm{W\mu -W^n_\gamma \mu}_q\le c_3\,n^{-\QQ}\,,\quad \norm{\mu}_1\le 1\;,
\ee
such that
\be
\label{enrho}
e_{[\rho n]}(W^n_\gamma)\le c_2\,n^{-\QQ}\,,\quad n\ge 1\,,
\ee
 for a certain $\rho\ge 1$.

Let us shortly describe the strategy of this quite involved construction procedure.
In a first step, we build an auxiliary structure on the tree $T$. Namely, we construct a system
$(\B_m)_{m\ge 0}$ of refining tree partitions of $T$ based on the weights $\al$ and $\s$. This is done
in Subsections \ref{sub41} and \ref{sub42}.

Next, given $n\in\N$, we construct a set $\bbL_n$ of partitions of $T$
that will play the role of the parametric set $\Gamma_n$ mentioned above.
Namely,  for any $\mu\in \ell_1(T)$
with $\norm{\mu}_1\le 1$ we construct a special partition $\LL_\mu=\LL_\mu(n)$ of $T$. Any element
of the partition $\LL_\mu$ belongs to a suitable partition $\B_m$ built in the first step.
This construction is exposed in Subsection \ref{sub43}.
We let $\bbL_n=\{\LL_\mu\, : \, \mu\in \ell_1(T),\  \norm{\mu}_1\le 1\}$.
The size of  $\bbL_n$ remains under exponential control as required above.

Furthermore, each partition $\LL\in \bbL_n$ generates a representation
$W=\sum_{i=1}^4 W_\LL^i$, as explained in (\ref{Wasasum}) below.
We show that the sum $\sum_{i=1}^3 W_\LL^i$ can be used as approximating operators
as formulated in (\ref{Wgamma}) and which
admit the appropriate bound for the entropy numbers as stated in (\ref{enrho}).
Algebraic properties of the entropy numbers imply that it suffices to verify
$e_n(W_\LL^i)\le c_i\,n^{-\QQ}$ for $i=1,2,3$. The proof of these estimates
will be presented in Subsection \ref{sub44}. Surprisingly, thereby each of the three operators
must be treated by a different method.
\medskip

Hence, let us start with the investigation of
a special type of partitions of $T$.

\subsection{Tree Partitions}
\label{sub41}

Suppose we are given a subset $R\subseteq T$ with $\ro\in R$. If $r\in R$, set
$$
B_r :=\set{ s\in T : s\qp r\;\mbox{and}\; (r,s]\cap R=\emptyset}\;.
$$
Then $B_r$ is a tree in $T$ with root $r\in R$. Letting $\B:=\set{B_r : r\in R}$, the family
$\B$ is a partition of $T$ where each partition element is a tree. We call  $\B$ a
{\it tree partition} of $T$. Notice that each tree partition of $T$ may be represented in the
way described before with $R$ being the set of roots of $B\in\B$.

Given two tree partitions $\B_1$ and $\B_2$, we say that $\B_2$ refines $\B_1$
provided that each $B_2\in\B_2$ is contained in a suitable $B_1\in\B_1$.
Clearly, this is equivalent to $R_1\subseteq R_2$ with
generating (or root) sets $R_1$ and $R_2$ of $\B_1$ and $\B_2$, respectively.

Suppose now that $(\B_m)_{m\ge 0}$ is a sequence of tree partitions satisfying
$\B_0=\set{T}$ such that for each $m\ge 1$ the partition $\B_m$ refines $\B_{m-1}$.
Of course, this is equivalent to
$$
\{\ro\}=R_0\subseteq R_1\subseteq \cdots
$$
with the $R_m$ being the corresponding root sets. In order to distinguish the sets in
different levels let us write
$$
\B_m=\set{B_{r,m} : r\in R_m}
$$
where $B_{r,m}$ is an element of $\B_m$ with the root $r\in R_m$. Finally, set
\be
\label{defBinf}
\BB := \set{ B_{r,m} : r\in R_m\,,\; m\ge 0}\;.
\ee
Given $B_{r,m}$ and $B_{r',m'}$ in $\BB$ we say that the latter tree
is an offspring of the first one provided that $m'=m+1$ and, moreover,
$B_{r',m'}\subseteq B_{r,m}$. In that way $\BB$ becomes a tree with
root $B_{\ro,0}=\{T\}$. If we denote the generated partial order in $\BB$
by $"\PQ"$ resp.~by $"\PC"$ for the strict order, then $B_{r,m}\PQ B_{r',m'}$ if and only if $m'\ge m$ and
$B_{r',m'}\subseteq B_{r,m}$. Notice a minor abuse of notation:
we may have $B_{r,m}=B_{r,m'}$ as sets but they are equal in $\BB$ only if $m=m'$, i.e., the same
set may appear in different levels and is then treated as a multitude of different elements
in $\BB$.

In particular, all notions concerning trees apply to $\BB$. For example,
if we define the order of an element in $\BB$ as in (\ref{deforder}), then we
have
$$
\set{B\in \BB : \abs{B}=m}=\B_m\;.
$$
Let us still mention a special property of $\BB$. Suppose that $m'\ge m$. Then,
if $B,B'\in\BB$ have order $m$ or $m'$, respectively, then either $B\PQ B'$ or
$B\cap B'=\emptyset$.

\subsection{Construction of Tree Partitions}
\label{sub42}

Suppose we are given weights $\al$ and $\s$ on $T$ where $\s$ is assumed to
be as in (\ref{red}) with partition $\II =(I_k)_{k\in\Z}$. Let $d$ be the metric
constructed by (\ref{metric}) w.r.t.~$\al$, $\s$ and $q$. One of the main difficulties
is that the metric $d$ and the partition $\II$ do not match. For example, if $t\pq s$,
$t\in I_{k-1}$ and $s\in I_k$, then we get
\be
\label{example}
d(t,s)= \max\Big\{ 2^{-(k-1)}\Big(\sum_{t\pc v\pc \lambda(s)}\al(v)^q\Big)^{1/q},
2^{-k}\Big(\sum_{t\pc v\pq s}\al(v)^q\Big)^{1/q}\Big\}
\ee
where $\lambda(s)$ is defined by $I_k\cap [\ro,s]=[\lambda(s),s]$. Since we do not have
any information about the inner sums, this expression is difficult to handle.
Observe that for general $t,s\in T$ with $t\pq s$ expression (\ref{example}) becomes even more complicated.
Therefore we modify $d$ in a way better suited to $\II$
and set
$$
d_\II(t,s):= \left\{
\begin{array}{ccc}
\min\set{d(\lambda(s),s),d(t,s)}&:& t\pq s\\
+\infty&:&\mbox{otherwise}
\end{array}
\right.
$$
with $\lambda(s)$ as before. Let us reformulate this expression.
To this end we define an equivalence relation on $T$ by setting
$t\equiv s$ provided that there is a $k\in\Z$ with $t,s\in I_k$. Then,
if $t\pq s$, we may write $d_\II(t,s)$ as
$$
d_\II(t,s)=\s(s)\left(\sum_{t\pc v\pq s\atop{v\equiv s}}\al(v)^q\right)^{1/q}\;.
$$
Thus $d_\II$ may be viewed as a localization of $d$. Although
in general $d_\II$ is not a metric on $T$, we will use it later on to measure some
"distances".
\medskip

We suppose now that $T$ is a tree satisfying
\be
\label{logN}
\log \tilde N(T,d,\eps)\le \eps^{-q'}\;.
\ee
The next objective is to construct tree partitions suited to our problem. We do so by defining the
corresponding root sets.
For each $m\ge 1$ set
$$
\eps_m := (m\,\log 2 )^{-\QQ}\;.
$$
In view of (\ref{logN}) this choice immediately provides
\be \label{tNeps_m}
 \tilde N(T,d,\eps_m)\le 2^m.
\ee

\begin{prop}
\label{construct}
Suppose $(\ref{logN})$.
Then there are subsets $(R_m)_{m\ge 0}$ of $T$ possessing the following properties:
\bee
\item
It holds
$$
\set{\ro}=R_0\subseteq R_1\subseteq\cdots\;.
$$
\item
For each $m\ge 0$ one has \be \label{sizeRm}
\# R_m \le 2^{m+1}\;.
\ee
\item
The sets $R_m$ are $\eps_m$--order nets with respect to $d_\II$, i.e.
\be
\label{net}
\sup_{s\in T}\min_{r\in R_m} d_\II(r,s)<\eps_m\;.
\ee
\item
The sets $R_m$ are minimal w.r.t.~the order in $T$, i.e., whenever $\tau\in R_m\setminus R_{m-1}$,
hence $\tau\not=\ro$, thus $\tau^-$ is well--defined,
then $R_m^\tau := (R_m\setminus\set{\tau})\cup\set{\tau^-}$ does no longer satisfy $(\ref{net})$.
\eee
\end{prop}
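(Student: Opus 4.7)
The strategy is a two-stage inductive procedure. I first augment a $d$-net to form a nested candidate $R_m^{(0)}$ of the correct size, then prune it toward the root until condition (4) is forced.

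\emph{Augmentation.} By (\ref{tNeps_m}), for each $m\ge 1$ there exists an $\eps_m$-order net $S_m\subseteq T$ with respect to $d$ satisfying $\#S_m\le 2^m$. Any such $S_m$ is automatically an $\eps_m$-order net with respect to $d_\II$ as well, since for $r\pq s$ the choice $v=s$ in (\ref{metric}) gives
$$
d(r,s)\ \ge\ \s(s)\Big(\sum_{r\pc\tau\pq s}\al(\tau)^q\Big)^{1/q}\ \ge\ \s(s)\Big(\sum_{r\pc v\pq s,\ v\equiv s}\al(v)^q\Big)^{1/q}=\ d_\II(r,s).
$$
Set $R_0:=\{\ro\}$ and, having defined $R_{m-1}$ with $\#R_{m-1}\le 2^m$, put $R_m^{(0)}:=R_{m-1}\cup S_m$. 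Then $R_m^{(0)}\supseteq R_{m-1}$ is a $d_\II$-order net at scale $\eps_m$ of cardinality at most $2^{m+1}$.

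\emph{Pruning.} Set $R_m:=R_m^{(0)}$ and iterate the following step: as long as there exists some $\tau\in R_m\setminus R_{m-1}$ (necessarily $\tau\neq\ro$, since $\ro\in R_{m-1}$) such that $R_m^\tau:=(R_m\setminus\{\tau\})\cup\{\tau^-\}$ is still an $\eps_m$-order net with respect to $d_\II$, replace $R_m$ by $R_m^\tau$. This process never disturbs $R_{m-1}$, so the inclusion $R_{m-1}\subseteq R_m$ is preserved; and $\#R_m$ never increases. Termination is guaranteed by the non-negative integer monovariant $\Phi(R_m):=\sum_{\tau\in R_m\setminus R_{m-1}}|\tau|$, which strictly decreases at every step: either $\tau^-\in R_m$ already and $\tau$ is simply removed, so $\Phi$ drops by $|\tau|\ge 1$; or $\tau$ is replaced by $\tau^-$, so $|\tau|$ in the sum becomes $|\tau|-1$.

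\emph{Verification.} On termination: (1) follows from the preserved inclusion; (2) from $\#R_m\le\#R_m^{(0)}\le 2^{m+1}$; (3) because the order-net property is precisely the condition tested before every swap; and (4) because no admissible swap remains, which is the very definition of termination. The main subtlety is the termination of the pruning loop when $T$ is infinite; this is handled by the monovariant $\Phi$, which is finite at every stage since $R_m$ is finite. No other step presents a real obstacle.
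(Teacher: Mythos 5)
Your proof is correct and essentially the paper's own argument: the paper likewise builds $R_m$ by adjoining to $R_{m-1}$ an order $\eps_m$-net of size at most $2^m$ (using $d_\II(r,s)\le d(r,s)$ for $r\pq s$) and enforces property (4) via the very same functional $\sum_{\tau}|\tau|$, except that it picks a global minimizer (first of cardinality, then of this sum) over all admissible augmentations, whereas you descend to a local minimum by greedy parent-swaps with that sum serving as the termination monovariant. The verification of (1)--(3) and the way local minimality yields (4) coincide, so there is no gap.
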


\begin{proof}
We construct the sets $R_m$ by induction. Take $R_0:=\set{\ro}$ and suppose that for some $m\ge 1$ we already
have defined $R_0,\ldots,R_{m-1}$ possessing the desired properties.
Let
$$
   \mA_m:=\set{A\subseteq T : \# A \le 2^m\;\mbox{and}\; \sup_{s\in T}\min_{r\in A\cup R_{m-1}} d_\II(r,s)<\eps_m}\;.
$$
First of all, we establish that $\mA_m\not=\emptyset$.
Indeed, due to (\ref{tNeps_m}) there exist order $\eps_m$--nets of cardinality less or equal than $2^m$. Moreover, any such
net belongs to $\mA_m$, due to the definition of order nets and to the inequality  $d_\II(r,s)\le d(r,s)$ that
holds whenever $r\pq s$.

Next define a subclass $\mA^0_m$ of $\mA_m$ by
$$
   \mA^0_m:=\set{A\in\mA_m : \# A \;\mbox{is minimal}}
$$
and distinguish between the two following cases:\\
\textit{First case:}\ $\emptyset\in\mA^0_m$. This happens whenever $R_{m-1}$ satisfies
(\ref{net}) not only for $\eps_{m-1}$ but also for $\eps_m$. In that case we set $R_m:=R_{m-1}$.
Of course, the first three properties are satisfied and the fourth one holds by trivial reason.

\noindent
\textit{Second case:}\ $\emptyset\notin \mA^0_m$. Then all sets in $\mA^0_m$ have the same
positive cardinality  $p\le 2^m$.
For any  $A \in \mA_m^0$ let $F(A):= \sum_{\tau\in A}|\tau|$ and choose a set $A_m^*\in \mA_m^0$
such that
\[
 F(A_m^*)=\min_{A\in \mA_m^0} F(A)\;.
\]
Set
$$
   R_m := R_{m-1}\cup A_m^*\;.
$$
Clearly, then $R_{m-1}\subseteq R_m$ holds true.
Next,
$$
  \# R_m =\# R_{m-1} +\# A_m^* \le 2^m +p\le  2^m+2^m=2^{m+1},
$$
as asserted in property (2). Since $A_m^*\in \mA_m$, condition (\ref{net}) required in property (3) holds as well.

It remains to check property (4).  Fix any $\tau\in A_m^*$.
Note that necessarily $\tau \notin R_{m-1}$
because otherwise, by dropping $\tau$, we could diminish the cardinality of the set
and stay in $\mA_m$. In particular, we get $\tau\not=\ro$.

Next, consider the set  $A_m^\tau:=(A^*_m\setminus \set{\tau})\cup\set{\tau^-}$.
Clearly, $F(A_m^\tau)<F(A_m^*)$. Since $F$ attains its minimum on $\mA_m^0$ at $A_m^*$,
we infer that $A_m^\tau\not \in \mA_m^0$. Moreover, since $\# A_m^\tau =\# A^*_m =p$,
it follows that  $A_m^\tau\not \in \mA_m$. Consequently, because of $A_m^*=R_m\setminus R_{m-1}$,
property (4) of the proposition
holds.
This completes the proof.
\end{proof}
\medskip

Before proceeding further,
let us recall that the $R_m$ constructed above lead to tree partitions $\B_m$ of $T$
with $\B_m=\set{B_{r,m} : r\in R_m}$ where $s\in B_{r,m}$ if and only if $s\qp r$ and
$(r,s]\cap R_m=\emptyset$. We have
\be
\label{net2}
    d_\II(r,s)<\eps_m\quad \textrm{whenever} \ s\in B_{r,m}\, .
\ee
Moreover, each partition $\B_m$ refines $\B_{m-1}$.
\medskip

Next we turn to the crucial estimate for the tree partitions constructed in
Proposition \ref{construct}. Here property (4) plays an important role.

\begin{prop}
\label{crucial}
Let the $R_m$ be as in Proposition $\ref{construct}$ with corresponding tree partitions $\B_m=\set{B_{r,m} :r\in R_m}$.
Fix $m\ge 1$ and let $r\in R_{m-1}$, $\tau\in R_m$ with $r\pc \tau$ and $\tau\in B_{r,m-1}$. Then this implies
$$
\sigma(\tau)\left(\sum_{r\pc v\pq \tau^-\atop{v\equiv \tau}}\al(v)^q\right)^{1/q}\le c\,m^{-1}\;.
$$
\end{prop}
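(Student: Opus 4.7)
I would start by reducing the claim to the case $\tau \equiv \tau^-$ (i.e., $\tau$ and its parent $\tau^-$ lie in the same cell $I_k$ of the partition $\II$). Indeed, if $\tau \not\equiv \tau^-$, then by the monotonicity built into property (2) of $\II$, no $v \preceq \tau^-$ can satisfy $v \equiv \tau$, so the summation range is empty and the inequality is trivial. So I would assume throughout that $\tau, \tau^- \in I_k$ for some $k$.

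The key step is to extract the right test point $s^*$ from property (4) of Proposition \ref{construct}. Since $R_m^\tau := (R_m \setminus \{\tau\}) \cup \{\tau^-\}$ fails the net property (\ref{net}), there exists $s^* \in T$ with $d_\II(r'', s^*) \geq \eps_m$ for every $r'' \in R_m^\tau$. Combined with the fact that $R_m$ itself satisfies (\ref{net}), the unique element of $R_m$ lying within $d_\II$-distance $\eps_m$ of $s^*$ must be $\tau$ itself. Hence $\tau \preceq s^*$, $d_\II(\tau, s^*) < \eps_m$, and $d_\II(\tau^-, s^*) \geq \eps_m$. A short check, using that by monotonicity of $\II$ any element of $I_j$ below $s^*$ with $j > k$ must lie strictly above $\tau$, shows that this ``gap'' forces $s^* \in I_k$, i.e.\ $s^* \equiv \tau$.

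I would next establish that $s^* \in B_{r, m-1}$. Suppose otherwise; then $(r, s^*] \cap R_{m-1}$ contains some $r'$. Since $\tau \in B_{r, m-1}$ gives $(r, \tau] \cap R_{m-1} = \emptyset$, any such $r'$ must satisfy $\tau \prec r' \preceq s^*$. Because $r' \in R_{m-1} \subseteq R_m \setminus \{\tau\} \subseteq R_m^\tau$, property (4) yields $d_\II(r', s^*) \geq \eps_m$; but from $(r', s^*] \subseteq (\tau, s^*]$ and the formula for $d_\II$ we get $d_\II(r', s^*) \leq d_\II(\tau, s^*) < \eps_m$, a contradiction. Hence $s^* \in B_{r, m-1}$, and (\ref{net2}) gives $d_\II(r, s^*) < \eps_{m-1}$. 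I expect this case-exclusion step to be the main obstacle, since without it property (4) only yields the weaker trivial bound coming from $\eps_{m-1}$ alone.

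The remainder is arithmetic. Because $r \preceq \tau^- \preceq s^*$ and $\tau^- \equiv s^* \in I_k$, the $q$-th power of $d_\II$ is additive along the chain:
$$
 d_\II(r, s^*)^q = d_\II(r, \tau^-)^q + d_\II(\tau^-, s^*)^q.
$$
Subtracting and inserting the bounds $d_\II(r, s^*) < \eps_{m-1}$ and $d_\II(\tau^-, s^*) \geq \eps_m$ yields
$$
 d_\II(r, \tau^-)^q < \eps_{m-1}^q - \eps_m^q .
$$
Since $\eps_m^q = (m \log 2)^{-(q-1)}$, applying the mean value theorem to $x \mapsto x^{-(q-1)}$ gives $\eps_{m-1}^q - \eps_m^q \leq c(q)\, m^{-q}$ for $m \geq 2$, and the case $m=1$ is handled by the uniform bound $d_\II \leq d \leq \kappa$ from Section 3. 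Unfolding $d_\II(r, \tau^-) = \sigma(\tau) \bigl(\sum_{r \prec v \preceq \tau^-,\, v\equiv\tau} \al(v)^q\bigr)^{1/q}$ delivers the claimed inequality.
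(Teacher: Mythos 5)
Your proof is correct and takes essentially the same route as the paper's: you invoke the minimality property (4) of Proposition \ref{construct} to produce a witness point $s^*$, identify $\tau$ as the only root of $R_m$ within $d_\II$--distance $\eps_m$ of $s^*$, deduce $s^*\equiv\tau$, place $s^*$ in $B_{r,m-1}$ so that (\ref{net2}) gives $d_\II(r,s^*)<\eps_{m-1}$, and conclude by subtracting $q$--th powers and comparing $\eps_{m-1}^q-\eps_m^q$ with $m^{-q}$. The small deviations (checking $s^*\in B_{r,m-1}$ directly instead of via $s^*\in B_{\tau,m}\subseteq B_{r,m-1}$, the upfront reduction to $\tau\equiv\tau^-$, and the explicit treatment of $m=1$) are only presentational.
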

\begin{proof}
First note that $\tau\in B_{r,m-1}$ yields $\tau\notin R_{m-1}$, consequently $\tau\in R_m\setminus R_{m-1}$
and we are in the situation of property (4) of Proposition \ref{construct}. Hence there is an
$s\in T$ such that with $R_m^\tau := (R_m\setminus\set{\tau})\cup\set{\tau^-}$ we get
\be
\label{Rtau}
\min_{v\in R_m^\tau} d_\II(v,s)\ge \eps_m\;.
\ee
On the other hand, by property (3) of Proposition \ref{construct} it holds
\be
\label{Rm}
\min_{v\in R_m} d_\II(v,s)<\eps_m\;.
\ee
Since $R_m$ and $R_m^\tau$ differ only by one point, namely, by $\tau$ or $\tau^-$, respectively,
the two minima in (\ref{Rtau}) and (\ref{Rm}) may only be different if
$\tau$ is the maximal element in $[\ro,s]\cap R_m$. This implies  $s\in B_{\tau,m}$ as well as
\be
\label{dItau}
d_\II(\tau^-,s)\ge \eps_m\quad\mbox{and}\quad d_\II(\tau,s)<\eps_m\;.
\ee
Furthermore, it follows that $\tau\equiv  s$. Indeed, if they would not be
equivalent, then this would imply
$$
d_\II(\tau,s) =d(\lambda(s), s)= d_\II(\tau^-,s)
$$
which contradicts (\ref{dItau}). Recall that for $s\in I_k$  we denoted by $\lambda(s)$ the minimal element
in $[\ro,s]\cap I_k$.
Now from $s\equiv \tau$ we derive
\[
  \eps_m^q \le d_\II(\tau^-,s)^q= \sigma(s)^q\sum_{\tau^- \prec v\preceq s}\alpha(v)^q.
\]
On the other hand, we have $s\in  B_{\tau,m}\subseteq B_{r,m-1}$, hence by (\ref{net2})
\[
\sigma(s)^q\sum_{{ r \prec v\preceq s\atop v\equiv s}}\alpha(v)^q =d_\II(r,s)^q< \eps_{m-1}^q.
\]
By substraction we obtain
\begin{eqnarray*}
\sigma(\tau)^q  \sum_{{r \prec v\preceq \tau^- \atop v\equiv \tau}}\alpha(v)^q
  &=& \sigma(s)^q  \sum_{{r \prec v\preceq \tau^- \atop v\equiv s}}\alpha(v)^q
\\
&=& \sigma(s)^q
  \left(  \sum_{{r \prec v\preceq s \atop v\equiv s}}\alpha(v)^q
     -  \sum_{\tau^- \prec v\preceq s}\alpha(v)^q   \right)
\\
&\le&  \eps_{m-1}^q-\eps_m^q = c\left((m-1)^{-(q-1)}- m^{-(q-1)} \right)\le c \ m^{-q},
\end{eqnarray*}
as required.
\end{proof}

\subsection{Heavy and Light Domains}
\label{sub43}
Suppose we are given a sequence $\B_m$ of tree partitions as before, i.e., $\B_0=\{T\}$
and $\B_m$ refines $\B_{m-1}$. In the moment those tree partitions may be quite general,
but later on we will take the special partitions constructed via Proposition \ref{construct}.

We fix a number $n\ge 1$ and everything done now will depend on this number $n$
(although we do not reflect this dependence in the notation).
Let
$\mu\in\ell_1(T)$ be an arbitrary non--zero element with $\norm{\mu}_1\le 1$.
Define $\BB$ as in (\ref{defBinf}).
A subset $B\in\BB$ is said to be \textit{heavy} (w.r.t.~$\mu$) provided that
$$
\abs{\mu}(B)>\frac{\abs{B}}{n}
$$
where here and later on $\abs{\mu}(B):=\sum_{t\in B}\abs{\mu(t)}$.
Recall that $\abs{B}=m$ means that $B\in\B_m$. Otherwise, i.e., if
\be
\label{light}
\abs{\mu}(B)\le\frac{\abs{B}}{n}\;,
\ee
we call $B\in\BB$ \textit{light}. If
\be
\label{heavyall}
\thd:=\set{ B\in \BB : B\;\mbox{is heavy w.r.t.}\;\mu}\;,
\ee
it follows that $\thd\subseteq\set{B\in \BB : \abs{B}\le n}$. We have $B_{\ro,0}\in\thd$ and,
moreover, whenever $B\in\thd$ and $B'\in\BB$ satisfy $B'\PQ B$, then this implies $B'\in\thd$ as well.
In different words, $\thd$ is a subtree of $\BB$ and we call it, in accordance with the terminology of \cite{Lif10},
the {\it essential tree} in $\BB$ (w.r.t.~$\mu$ and $n$).
\bigskip

Among the light subsets of $\BB$ we choose the extremal ones as follows:
\be
\label{extlight}
\L:=\set{L\in\BB : L \;\mbox{is light and all }\;B\in\BB\;\mbox{with}\; B\PC L\;\mbox{are heavy}}\;.
\ee
In different words, a set $B_{r,m}\in\BB$ belongs to $\L$ if and only if $B_{r,m}$ is light and
each $B_{r',m'}\in\BB$ with $m'<m$ and $B_{r,m}\subseteq B_{r',m'}$ is heavy. Of course, it
suffices if this property is valid for $m'=m-1$.

\begin{prop}
\label{p1}
The set $\L\subseteq\BB$ is a tree partition of $T$.
\end{prop}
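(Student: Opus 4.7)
My plan is to verify that $\L$ meets the definition of a tree partition by showing directly that it partitions $T$ into subtrees, each rooted at a point of $T$, with $\ro$ belonging to the set of roots. That each $L\in\L$ is a subtree of $T$ is automatic: by construction $L=B_{r,m}$ for some $r\in R_m$, which is a tree in $T$ rooted at $r$.

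For the covering property I would fix $t\in T$ and walk along the chain $B^{(0)}\PQ B^{(1)}\PQ\cdots$ in $\BB$, where $B^{(m)}\in\B_m$ is the unique element at level $m$ containing $t$. We have $B^{(0)}=T$, and since $\norm{\mu}_1\le 1$ while $|B^{(m)}|=m$, the set $B^{(m)}$ must be light once $m>n$; on the other hand $B^{(0)}$ is heavy because $\mu\ne 0$. Taking the least $m^*\ge 1$ with $B^{(m^*)}$ light, all strict $\BB$-ancestors of $B^{(m^*)}$ are heavy by minimality, so $B^{(m^*)}\in\L$ contains $t$. Disjointness follows from the $\BB$-tree structure: if two elements of $\L$ contained a common $t$, they would be $\PQ$-comparable ancestors of $t$, and a strict comparison would make the smaller one a strict $\BB$-ancestor of an element of $\L$, hence heavy -- contradicting its own membership in $\L$.

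To identify $\L$ as a tree partition in the formal sense, set $R:=\{r\in T : r\ \text{is the root of some}\ L\in\L\}$. The part containing $\ro$ must have $\ro$ as its root (since $\ro$ is the global minimum of $T$), so $\ro\in R$. For $L\in\L$ with root $r$ I would verify $L=\{s\qp r:(r,s]\cap R=\emptyset\}$ via convexity: if $s\in L$ admitted some $r'\in(r,s]\cap R$, then the distinct part $L'\in\L$ rooted at $r'$ would satisfy $r'\in[r,s]\subseteq L$ (trees in $T$ being convex between the root and any element), contradicting $L\cap L'=\emptyset$; the reverse inclusion uses coverage to locate the part of $\L$ containing a candidate $s$ and then compares roots. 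This last identification step is the main delicate point: one must translate the $\BB$-level partition structure into a description via $R\subseteq T$ using only the disjointness and convexity already established, without circularly invoking the statement being proved.
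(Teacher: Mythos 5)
Your proof is correct and follows essentially the same route as the paper: disjointness via the fact that two overlapping elements of $\BB$ are $\PQ$-comparable, which forces a light set to be a strict ancestor of an element of $\L$ and hence heavy, and coverage via the chain $B^{(0)}\PQ B^{(1)}\PQ\cdots$ containing a given point and its first light level. Your additional step identifying the root set $R$ and checking $L=\set{s\qp r : (r,s]\cap R=\emptyset}$ is sound (the paper leaves this implicit, since a partition of $T$ into trees is automatically generated by its set of roots), so no gap.
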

\begin{proof}
First we show that for $L,L'\in\L$ we either have $L=L'$ or $L\cap L'=\emptyset$.
Thus let us suppose $L\not=L'$. If $L\in\B_m$ and $L'\in\B_{m'}$, then $m=m'$ yields
$L\cap L'=\emptyset$ and we are done. Assume now $m'<m$. In that case either
$L\cap L'=\emptyset$ or $L'\PC L$. But since $L\in\L$ and $L'$ is light, the
latter case is impossible and this shows $L\cap L'=\emptyset$ as asserted.

Take now an arbitrary $s\in T$ and let $B_m(s)\in\B_m$ be the unique set in $\B_m$
with $s\in B_m(s)$. Then
$$
T=B_0(s)\PQ B_1(s)\PQ\cdots\;.
$$
Since $B_0(s)$ is heavy, there is a smallest $m_0=m_0(s)$ such that $B_{m_0}(s)$ is
light. Of course, $B_{m_0}(s)\in\L$ showing that
$T=\bigcup_{L\in \L} L$. This completes the proof.
\end{proof}

\begin{remark}
\rm
Observe that $\L$ is completely determined by $\thd$. Indeed, we have
$L\in \L$ if and only if $L\notin \thd$ and there is a $B\in\thd$
such that $L$ is an offspring of $B$. In different words, given
$\mu_1,\mu_2\in \ell_1(T)$, then it follows that $\LL_{\mu_1}=\LL_{\mu_2}$ if and only if
$\B^\bullet_{\mu_1}=\B^\bullet_{\mu_2}$.
\end{remark}
\medskip

The size of each essential tree and the total number of all possible essential
trees (if we let $\mu$ vary) are strongly bounded. For completeness, let us repeat the corresponding arguments
from \cite{Lif10}.

\begin{lem} \label{lb1}
Let $\mu\in\ell_1(T)$ with $\|\mu\|_1\le 1$ and denote by
${\mQ}\subseteq\BB$ the set of terminal domains of the subtree $\thd$.
It is true that
\begin{equation}\label{bsizeTmuQ}
\sum_{B\in {\mQ}} |B| < n.
\end{equation}
Moreover,
\begin{equation}\label{bsizeTmu}
\# \thd  \le n.
\end{equation}
\end{lem}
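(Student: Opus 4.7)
The plan is to handle the two inequalities in sequence, using (\ref{bsizeTmuQ}) as the main input for (\ref{bsizeTmu}).

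For (\ref{bsizeTmuQ}), the first step is to check that the terminal domains in $\mQ$ are pairwise disjoint as subsets of $T$. Indeed, if two distinct elements $B_1, B_2 \in \mQ$ were $\BB$-comparable, say $B_1 \PC B_2$, then every $\BB$-element between them would also be in $\thd$ (because $\thd$ is closed under passage to $\BB$-ancestors), so $B_2$ would be a $\thd$-descendant of $B_1$, contradicting that $B_1$ is terminal. Hence distinct elements of $\mQ$ are $\BB$-incomparable. But two $\BB$-incomparable members of $\BB$ automatically correspond to disjoint subsets of $T$: at the same level they are distinct parts of a common partition $\B_m$, and at different levels $m<m'$ the refinement property of $(\B_m)$ forces any pair of distinct, non-nested sets to be disjoint. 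Combining this disjointness with the heaviness condition $\abs{\mu}(B) > \abs{B}/n$ for each $B \in \mQ$ gives
\[
\frac{1}{n}\sum_{B\in\mQ}\abs{B} \;<\; \sum_{B\in\mQ}\abs{\mu}(B) \;\le\; \norm{\mu}_1 \;\le\; 1,
\]
which is exactly (\ref{bsizeTmuQ}).

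For (\ref{bsizeTmu}), I would view $\thd$ as an abstract finite rooted tree, namely the subtree of $\BB$ with root $B_{\ro,0}$ and leaf set $\mQ$; by the very definition of $\mQ$, every non-leaf vertex of $\thd$ has at least one $\BB$-child that is still heavy, hence lies in $\thd$. For rooted trees of this type the following elementary counting inequality holds:
\[
\# \thd - 1 \;\le\; \sum_{Q\in\mQ}\abs{Q}.
\]
The argument is a double count of edges: $\thd$ contains $\# \thd - 1$ edges, and each edge lies on the root-to-leaf path of at least one $Q \in \mQ$ (just follow any chain of $\thd$-children downwards from the edge's lower endpoint until a leaf is reached), while the length of such a path is precisely $\abs{Q}$. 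Combining this bound with (\ref{bsizeTmuQ}) and noting that $n$ and all the $\abs{Q}$'s are integers, so that $\sum_{Q\in\mQ}\abs{Q}\le n-1$, yields $\# \thd \le n$.

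The only step that requires a small amount of care is the translation between $\BB$-incomparability and set-theoretic disjointness in $T$, which reduces to unwinding the partition/refinement structure of $(\B_m)_{m\ge 0}$ together with the "minor abuse of notation" that the same subset of $T$ may appear at several levels of $\BB$. Once that is in place, both assertions reduce to purely combinatorial counting and I do not anticipate any genuine analytic obstacle.
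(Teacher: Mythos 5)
Your proof is correct and takes essentially the same route as the paper: disjointness of the heavy terminal domains together with the heaviness condition gives (\ref{bsizeTmuQ}), and your edge double-count is just the paper's charging of each non-root element of $\thd$ to a terminal domain below it, yielding $\#\thd\le 1+\sum_{Q\in\mQ}\abs{Q}$ and hence (\ref{bsizeTmu}) by integrality. The additional details you supply (incomparability of terminal domains implies set-theoretic disjointness, the explicit integrality step) merely spell out what the paper leaves implicit.
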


\begin{proof} Since all terminal domains are disjoint and they are all heavy,
we have
\[
1 \ge \norm{\mu}_1 \ge \sum_{B\in {\mQ}} |\mu|(B) >  \sum_{B\in {\mQ}} \frac{|B|}{n}\ .
\]
It follows that $\sum_{B\in {\mQ}} |B| < n$, as asserted in (\ref{bsizeTmuQ}).

Since any node in a finite tree precedes to at least one terminal node, we have
\begin{eqnarray*}
\# \thd  &=& 1+ \sum_{B\in \thd, |B|>0} 1
\le 1+ \sum_{B\in \thd, |B|>0} \#\{ B'\in {\mQ}: B\PQ B'\}
\\
&=&
1 +  \sum_{B'\in {\mQ}} \ \#\{B: |B|>0,B \PQ B'\} =
1 +  \sum_{B'\in {\mQ}} |B'|,
\end{eqnarray*}
thus (\ref{bsizeTmu}) follows from (\ref{bsizeTmuQ}).
\end{proof}

Till now the sequence $(\B_m)_{m\ge 0}$ of tree partitions could be quite general, i.e.~we
only assumed $\B_0=\{T\}$ and that $\B_m$ refines $\B_{m-1}$, $m\ge 1$. To proceed
we have to know something about the size of the sets $\B_m$. In particular, this is the case
if the $\B_m$ are constructed by root sets $R_m$ satisfying the properties of Proposition \ref{construct}.
Thus from now let us deal with those special tree partitions.

\begin{lem} \label{lb2} \ The number of subtrees of $\BB$
whose terminal set ${\mQ}$ satisfies $(\ref{bsizeTmuQ})$ does not exceed
$(8e)^n$.
\end{lem}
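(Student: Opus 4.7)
The plan is to reduce the counting to an enumeration by the depth profile of the terminal set, relying on the bound $\#\B_m \le 2^{m+1}$ from Proposition \ref{construct}(2).

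First I would observe that any such subtree $\thd$ is uniquely recovered from its terminal set $\mQ$ as $\thd = \{B \in \BB : B \PQ B' \text{ for some } B' \in \mQ\}$ (this works because $\thd$ is finite by Lemma \ref{lb1}). Hence it suffices to bound the number of admissible $\mQ$, and since every such $\mQ$ is an antichain, I may relax the count to arbitrary subsets $\mQ \subseteq \BB$ satisfying $\sum_{B \in \mQ}|B| < n$. The case $\mQ = \{T\}$ contributes exactly one subtree; otherwise $|B| \ge 1$ for every $B \in \mQ$, so $t := \#\mQ \le \sum_{B \in \mQ}|B| \le n-1$.

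I would then parameterize non-trivial $\mQ$ by its depth profile $(t_m)_{m\ge 1}$ with $t_m := \#(\mQ \cap \B_m)$, which must satisfy $\sum_m m\, t_m \le n-1$. The number of admissible profiles equals the number of partitions of the integers $0,1,\ldots,n-1$; using $p(k) \le 2^{k-1}$ for $k \ge 1$ (bounded by the number of compositions of $k$) and summing geometrically yields at most $2^{n-1}$ profiles. For a fixed profile, the number of $\mQ$ realizing it is at most $\prod_m \binom{\#\B_m}{t_m} \le \prod_m \binom{2^{m+1}}{t_m}$.

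The crux of the argument is the resulting product estimate. Using $\binom{N}{k} \le (eN/k)^k \le (eN)^k$ for $k \ge 1$, I would compute
\[
\prod_m \binom{2^{m+1}}{t_m} \le \prod_{m:\, t_m \ge 1}(e\cdot 2^{m+1})^{t_m} = e^{\sum_m t_m}\cdot 2^{\sum_m(m+1)t_m} \le e^{n-1}\cdot 4^{n-1} = (4e)^{n-1},
\]
where the final inequality uses $\sum t_m \le n-1$ together with $\sum (m+1)t_m = \sum m\,t_m + \sum t_m \le 2(n-1)$. Multiplying by the profile count and adding one for $\mQ=\{T\}$ yields $1 + 2^{n-1}(4e)^{n-1} = 1 + (8e)^{n-1} \le (8e)^n$. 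The main obstacle to overcome is precisely this product bound, because $\binom{2^{m+1}}{t_m}$ is double-exponential in $m$; one must simultaneously spend both budgets $\sum t_m \le n-1$ and $\sum m\,t_m \le n-1$ (the latter coming directly from $(\ref{bsizeTmuQ})$) to bring the product down to clean exponential scale.
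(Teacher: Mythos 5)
Your proposal is correct and follows essentially the same route as the paper: enumerate terminal sets $\mQ$ by their level profile $(t_m)$ subject to $\sum_m m\,t_m<n$, and for each profile choose $t_m$ elements out of $\#\B_m\le 2^{m+1}$, with $2^{\sum_m (m+1)t_m}\le 4^{n}$ controlling the product. The only difference is in bounding the number of profiles, where you use the partition/composition bound $p(k)\le 2^{k-1}$ to get $2^{n-1}$, while the paper uses $q_m<n/m$ and $\prod_{m}(1+n/m)\le (2e)^n$; both land comfortably within $(8e)^n$.
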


\begin{proof}
Since a subtree is entirely defined by its terminal set, we have to
find out how many sets ${\mQ}$ satisfy (\ref{bsizeTmuQ}). Denote
$q_m=\#\{{\mQ}\cap \B_m\}$. Then (\ref{bsizeTmuQ}) writes as
\be
\label{summ}
\sum_m m\, q_m <  n.
\ee
Since $q_m < \frac n m$, the number of non-negative integer
solutions of this inequality does not exceed
\be
\label{card1}
\prod_{m=1}^{n-1} (1+\frac nm) \le \prod_{m=1}^{n-1}   \frac {2n}m
=\frac{(2n)^n}{n!} \le \frac{(2n)^n}{(n/e)^n}= (2e)^n.
\ee
Recall the bound for the size levels (\ref{sizeRm}) which yields
\[
 \# \B_m =\# R^m \le 2^{m+1}.
\]
Thus, for a given sequence $q_m$, while constructing a terminal set ${\mQ}$, on
the $m$-th level $\B_m$ we have to choose $q_m$ elements from at most
$2^{m+1}$ elements of $\B_m$. Therefore, because of (\ref{summ}),  the number of possible sets
does not exceed
\be
\label{card2}
\prod_{m=1}^{n-1} \left( { {2^{m+1}} \atop  {q_m} } \right) \le
\prod_{m=1}^{n-1} (2^{m+1})^{q_m} = 2^{\sum_{m=1}^{n-1} (m+1) q_m}
\le 2^{2n}.
\ee
Combining (\ref{card1}) and (\ref{card2}) leads to the desired estimate.
\end{proof}

\subsection{Approximating Operators}
\label{sub44}

As mentioned at the beginning of Section \ref{proof} our objective is to find
families of operators from $\ell_1(T)$ into $\ell_q(T)$
approximating $W$ in a pointwise sense and where we are able to control their entropy numbers.
We are going to
construct those families now.

Let $(\B_m)_{m\ge 0}$ be a sequence of tree partitions with root sets $(R_m)_{m\ge 0}$ as in Proposition
\ref{construct}. Fix $n\ge 1$ and, given $\mu\in\ell_1(T)$ with $\norm{\mu}_1\le 1$, we define
the tree partition $\L$ and the subtree $\thd$ as in (\ref{extlight}) and (\ref{heavyall}), respectively.
Set
$$
\bbL_n:=\set{\L : \mu\in\ell_1(T)\,,\;\norm{\mu}_1\le 1}
$$
(recall that $n$ plays an important role in the construction of heavy and light domains,
thus the $\L$ really depend on $n$)
and observe that by Lemma \ref{lb2}
$$
\# \bbL_n \le (8\ex)^n\;.
$$
Fix $\LL\in \bbL_n$. We are going to define elements $(r_L^\circ)_{L\in\LL}$,
$(r_L^-)_{L\in \LL}$ and $(r_L^\bullet)_{L\in\LL}$ as follows:\\
Take $L\in\LL$ which may be represented as
$L=B_{r,m}$ for some $m\ge 1$ and $r\in R_m$. Then we set
\bee
\item
$r_L^\circ:=r$, i.e., $r_L^\circ$ is the root of the tree $L$.
\item
If $r_L^\circ\not=\ro$, then by $r_L^-$ we denote the parent element of $r_L^\circ$.
\item
Finally, let $B_{r',m-1}$ be the parent element (in $\BB$) of $L=B_{r,m}$. Then we put
$r_L^\bullet:=r'$.
\eee
Two cases may appear.

\noindent
\textit{Generic case}: A set $L\in \LL$ is called \textit{generic} provided that
$r_L^\bullet\pc r_L^\circ$. Note that then even $r_L^\bullet\pq r_L^-\pc r_L^\circ$.

\noindent
\textit{Degenerated case}: A set $L\in \LL$ is called
\textit{degenerated} if $r_L^\bullet= r_L^\circ$, i.e., if $L$ and its parent element (in $\BB$) possess the same root.

\medskip

Fix $\LL\in \bbL_n$. We define now four operators $W_\LL^1,\ldots,W_\LL^4$ depending on $\LL$ and acting
from $\ell_1(T)$ to $\ell_q(T)$ so that
\be
\label{Wasasum}
   W=\sum_{i=1}^4 W_\LL^i \ .
\ee
Given $s\in T$ we denote by $\delta_s\in\ell_1(T)$ the unit vector at $s$, i.e., $\delta_s(t)=0$
if $t\not=s$ and $\delta_s(s)=1$. Then the operator $W$ defined in (\ref{defW})
is completely described by
\be \label{defW2}
    (W\,\delta_s) (t) = \s(s) \al(t)  \, \on_{\{t\equiv s\}}\, \on_{[\ro,s]}(t)\  , \qquad s,t\in T.
\ee
The representation of $W$ as a sum is related to a splitting of the branch $[\ro,s]$ (and of the corresponding
indicator $\on_{[0,s]}$ which appears in (\ref{defW2})) in four pieces as described below.

For each $s\in T$ choose the unique element $L\in\LL$ such that $s\in L$.
If this light set $L$ with $s\in L$ is generic, then we split $[\ro,s]$ as follows:
\be \label{fourpieces}
[\ro,s]=[\ro,r_L^\bullet]
      \cup  (r_L^\bullet,r_L^-]
      \cup \set{r_L^\circ}
      \cup (r_L^\circ,s]\;.
\ee
Accordingly, we let \bea
\label{W1}
(W_\LL^1\,\delta_s) (t) &:=& \s(s) \al(t)  \, \on_{\{t\equiv s\}}\, \on_{[\ro,r_L^\bullet] }(t)\  , \qquad t\in T; \\
\label{W2}
(W_\LL^2\,\delta_s) (t) &:=& \s(s) \al(t)  \, \on_{\{t\equiv s\}}\, \on_{(r_L^\bullet,r_L^-]}(t)\  , \,\;\quad t\in T; \\
\label{W3}
(W_\LL^3\,\delta_s) (t) &:=& \s(s) \al(t)  \, \on_{\{t\equiv s\}}\, \on_{\set{r_L^\circ}}(t)\  , \qquad t\in T; \\
\nonumber
(W_\LL^4\,\delta_s) (t) &:=& \s(s) \al(t)  \, \on_{\{t\equiv s\}}\, \on_{(r_L^\circ,s]}(t)\  , \qquad t\in T.
\eea
But if $L\in \LL$ with $s\in L$ is degenerated, i.e., if $r_L^\bullet= r_L^\circ$,
then we simply have
$$
 [\ro,s]=[\ro,r_L^\circ] \cup (r_L^\circ,s]= [\ro,r_L^\bullet] \cup (r_L^\circ,s]\;.
$$
Accordingly, we define $W_\LL^1\delta_s$ and $W_\LL^4\delta_s$ as in the generic case, while now $W_\LL^2\delta_s=W_\LL^3\delta_s=0$.
The representation (\ref{Wasasum}) is straightforward.

 Setting
$$
W_\LL := \sum_{i=1}^3 W_\LL^i
$$
we get $W - W_\LL= W_\LL^4$, hence in view of $\log (\# \bbL_n) \le c\,n$
it suffices to prove that for $\mu\in\ell_1(T)$ with $\norm{\mu}_1\le 1$ always
\be
\label{estW4}
\inf_{\LL\in\bbL_n}\norm{W_\LL^4\, \mu}_q\le c\, n^{-\QQ}
\ee
as well as
\be
\label{estWLL}
\sup_{\LL\in \bbL_n}\,e_{[\rho n]}(W_\LL)\le c\,n^{-\QQ}
\ee
for a certain $\rho\ge 1$.

\medskip
We begin with proving the first assertion.

\begin{prop}
\label{pestW4}
There is a $c=c(q)$ such that for each $\mu\in\ell_1(T)$ with $\norm{\mu}_1\le 1$ inequality $(\ref{estW4})$
holds.
\end{prop}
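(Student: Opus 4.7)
The plan is to choose $\LL = \LL_\mu$ itself (for which every $L \in \LL$ is light by construction), and show directly that $\|W_{\LL_\mu}^4 \mu\|_q \le c\, n^{-(1-1/q)}$. The whole proof hinges on the fact that $W_\LL^4$ splits into independent block-diagonal operators indexed by the elements of $\LL$.

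\textbf{Step 1 (localization).} For $s \in L \in \LL$ and $t \in (r_L^\circ, s]$, the fact that $L$ is a subtree rooted at $r_L^\circ$ forces $t \in L$ as well. Consequently, summing the definition of $W_\LL^4 \delta_s$ against $\mu$ yields, for any $t \in T$ lying in the light set $L=L(t)$,
\[
(W_\LL^4 \mu)(t) \;=\; \al(t)\sum_{\substack{s\in L,\ s \qp t \\ s \equiv t}} \s(s)\mu(s), \qquad t \in L\setminus\{r_L^\circ\},
\]
with $(W_\LL^4 \mu)(r_L^\circ)=0$. Thus $(W_\LL^4 \mu)|_L$ depends only on $\mu|_L$ and $\|W_\LL^4\mu\|_q^q = \sum_{L\in\LL}\|W_L(\mu|_L)\|_q^q$, where $W_L:\ell_1(L)\to\ell_q(L)$ is the displayed local operator.

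\textbf{Step 2 (operator-norm bound via $d_\II$).} Each column of $W_L$ has $\ell_q$-norm
\[
\|W_L\delta_s\|_q \;=\; \s(s)\left(\sum_{\substack{t\in(r_L^\circ,s]\\ t\equiv s}}\al(t)^q\right)^{1/q} \;=\; d_\II(r_L^\circ,s),
\]
by the very definition of $d_\II$. Since the $\ell_1\to\ell_q$ operator norm of a matrix equals the supremum of the $\ell_q$-norms of its columns, and since $s\in L=B_{r,m}$ implies $d_\II(r_L^\circ,s) < \eps_m$ by (\ref{net2}), we obtain
\[
\|W_L\|_{\ell_1\to\ell_q} \;\le\; \eps_m \qquad \text{whenever } L\in \B_m.
\]

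\textbf{Step 3 (use lightness and combine).} Since every $L\in\LL_\mu$ is light, $\|\mu|_L\|_1 = |\mu|(L)\le |L|/n = m/n$ for $L\in\B_m$. Using $q-1\in(0,1]$, factor
\[
\|W_L(\mu|_L)\|_q^q \;\le\; \eps_m^q\|\mu|_L\|_1^q \;=\; \eps_m^q\|\mu|_L\|_1^{q-1}\|\mu|_L\|_1 \;\le\; \eps_m^q\left(\tfrac{m}{n}\right)^{q-1}\|\mu|_L\|_1.
\]
Now $\eps_m^q = (m\log 2)^{-(q-1)}$, so the factor $\eps_m^q(m/n)^{q-1} = (\log 2)^{-(q-1)} n^{-(q-1)}$ is independent of $m$. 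Summing over $L\in\LL_\mu$ and using $\sum_{L}\|\mu|_L\|_1 = \|\mu\|_1\le 1$,
\[
\|W_{\LL_\mu}^4\mu\|_q^q \;\le\; c\,n^{-(q-1)}\sum_{L\in\LL_\mu}\|\mu|_L\|_1 \;\le\; c\,n^{-(q-1)},
\]
and taking $q$-th roots yields (\ref{estW4}).

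The only delicate point is the identification in Step 2 of $\|W_L\delta_s\|_q$ with $d_\II(r_L^\circ,s)$; everything else is bookkeeping. This identification is what makes $\eps_m$ appear naturally, and the cancellation $\eps_m^q \cdot m^{q-1} = $ constant is precisely the arithmetic reason behind the critical exponent $a=q'$ in the assumption (\ref{Nbound}).
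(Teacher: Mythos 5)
Your proof is correct and follows essentially the same route as the paper's: both exploit the disjoint supports of the blocks indexed by $L\in\LL$, identify $\norm{W_\LL^4\delta_s}_q$ with $d_\II(r_L^\circ,s)<\eps_{|L|}$ via (\ref{net2}), and then invoke lightness $|\mu|(L)\le |L|/n$ for the choice $\LL=\LL_\mu$ together with the cancellation $\eps_m^q\,(m/n)^{q-1}=c\,n^{-(q-1)}$. Your phrasing of the block estimate through the $\ell_1\to\ell_q$ operator norm (supremum of column norms) is just a repackaging of the paper's triangle-inequality step, so no substantive difference.
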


\begin{proof}
In a first step we estimate $\norm{W_\LL^4 \mu}_q$ for \textit{arbitrary} $\LL\in\bbL_n$ and
$\mu\in\ell_1(T)$.
Here we have
\begin{eqnarray*}
\Big\|W_\LL^4 \mu \Big\|_q^q
&=& \Big\|\sum_{s\in T} \mu(s) W_\LL^4 \delta_s\Big\|_q^q
\\
&=&
\Big\|\sum_{L\in \LL} \sum_{s\in L}
\mu(s)W_\LL^4 \delta_s\Big\|_q^q\; .
\end{eqnarray*}

 Notice that the interior sums represent elements of $\ell_q(T)$ with disjoint supports
 (each sum is supported by the corresponding domain $L$). Hence, using the definitions
 of $W_\LL^4$ and of $d_\II$ it follows that
 \begin{eqnarray} \nonumber
\Big\|W_\LL^4 \mu\Big\|_q^q
&=&
\sum_{L\in \LL} \Big\|\sum_{s\in L}
\mu(s)W_\LL^4  \delta_s\Big\|_q^q
\le
\sum_{L\in  \LL} \left[ \sum_{s\in L}
|\mu(s)| \ \Big\|W_\LL^4 \delta_s\Big\|_q\right]^q
\\ \nonumber
&=&
\sum_{L\in  \LL} \left[ \sum_{s\in L} |\mu(s)| \ d_\II(r^\circ_L,s) \right]^q
\le
\sum_{L\in \LL} \left[ |\mu|\left(L\right)^q \sup_{s\in L}  d_\II(r^\circ_L,s)^q \right]
\\ \label{appro_oper}
&\le&
\sum_{L\in \LL}  \left[ |\mu|\left(L\right)^q   \eps_{|L|}^q \right]
\qquad
 \end{eqnarray}
where in the last step we used (\ref{net2}).

Estimate (\ref{appro_oper}) holds for any $\LL\in\bbL_n$ and $\mu\in\ell_1(T)$. Next, given
$\mu\in\ell_1(T)$ with $\norm{\mu}_1\le 1$,
we specify $\LL$ by taking $\LL=\L$ for the given $\mu$.
By the construction of $\L$ this yields
$ |\mu|(L) \le \frac{|L|}{n}$ for each $L\in \L$, cf.  (\ref{light}) and  (\ref{extlight}).
Then (\ref{appro_oper}) can be further estimated
as follows:
\begin{eqnarray*}
  \Big\|W_{\LL_{\mu}}^4 \mu\Big\|_q^q
  &\le&
  \sum_{L\in \L}  \left[ |\mu|\left(L\right)^q    \eps_{|L|}^q \right]
  \\
  &=&
  \sum_{L\in \L}  \left[ |\mu|\left(L\right) \cdot |\mu|\left(L\right)^{q-1}    \eps_{|L|}^q \right]
  \\
  &\le&
  \sum_{L\in \L} |\mu|\left(L\right) \cdot
  \sup_{L\in \L}  \left[ |\mu|\left(L\right)^{q-1}    \eps_{|L|}^q \right]
 \\
  &\le&
  \|\mu\|_1 \cdot
  \sup_{L\in \L}  \left[ \left(  \frac{|L|}{n} \right)^{q-1}   \left(\log 2\, |L|\right)^{-(q-1)} \right]
  \\
  &\le& c \ n^{-(q-1)}.
  \end{eqnarray*}
Thus our calculations result in
\[
    \Big\|W_{\LL_{\mu}}^4 \mu\Big\|_q \le   c \ n^{-\QQ}
\]
which, of course, implies (\ref{estW4}) and completes the proof.
\end{proof}
\bigskip

Our next objective is to verify (\ref{estWLL}). Recall that $W_\LL = W_\LL^1+ W_\LL^2+W_\LL^3$ with
$W_\LL^i$, $i=1,2,3$, defined in (\ref{W1}), (\ref{W2}) and (\ref{W3}), respectively. By the additivity of
the entropy number this implies
$$
e_{3n-2}(W_\LL)\le e_n(W_\LL^1)+ e_n(W_\LL^2)+e_n(W_\LL^3)\;.
$$
Thus, if we are able to verify $e_n(W_\LL^i)\le c_i\,n^{-\QQ}$ for  $i=1,2,3$, then this leads to
$$
e_{3n-2}(W_\LL)\le c\,n^{-\QQ}\,,
$$
hence (\ref{estWLL}) is valid with $\rho=3$.
Consequently, it suffices to estimate $e_n(W_\LL^i)$ for $i=1,2,3$
separately. We start with estimating
$e_n(W_\LL^1)$.

\begin{prop}
\label{pestW1}
There is a constant $c=c(q)$ such that
$$
   e_n(W_\LL^1)\le c\,n^{-\QQ}\;.
$$
\end{prop}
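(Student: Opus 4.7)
The approach I would take is to factor $W_\LL^1$ through a low-dimensional auxiliary $\ell_1$ space indexed by $\LL$ itself, and then invoke an absolutely-convex-hull entropy estimate in the type-$q$ space $\ell_q$.

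The first ingredient is a rigidity observation: for each $L \in \LL$ let $k_L \in \Z$ be such that $r_L^\circ \in I_{k_L}$, and set
\[
g_L(t) := 2^{-k_L}\al(t)\on_{[\ro, r_L^\bullet] \cap I_{k_L}}(t), \qquad t \in T.
\]
Then for every $s \in L$ one has $W_\LL^1 \delta_s = g_L$ if $s \equiv r_L^\circ$ and $W_\LL^1 \delta_s = 0$ otherwise. Indeed, in the generic case any $t \pq r_L^\bullet$ lying in $I_{k_L}$ forces the whole chain $[t, s]$ (which passes through $r_L^\circ$) to sit in $I_{k_L}$, so in particular $s \equiv r_L^\circ$ and $g_L$ is independent of the choice of $s$; the degenerate case is analogous. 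Consequently $W_\LL^1 = \tilde W \circ P$, where
\[
P\mu := \Big(\sum_{s \in L \cap I_{k_L}} \mu(s)\Big)_{L \in \LL}, \qquad \tilde W e_L := g_L,
\]
and $\|P : \ell_1(T) \to \ell_1(\LL)\| \le 1$ since the sets $L \cap I_{k_L}$ are pairwise disjoint. Hence $e_n(W_\LL^1) \le e_n(\tilde W)$, and $\tilde W(B_{\ell_1(\LL)})$ is the absolutely convex hull of $\{\pm g_L\}_{L\in \LL}$.

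The second ingredient is a uniform bound on the columns: by monotonicity of $\s$ and (\ref{bound}),
\[
\|g_L\|_q = \s(r_L^\circ) \Big(\sum_{t \in [\lambda(r_L^\circ), r_L^\bullet]} \al(t)^q\Big)^{1/q} \le \s(r_L^\bullet)\Big(\sum_{t \pq r_L^\bullet}\al(t)^q\Big)^{1/q} \le \kappa,
\]
and after the scaling reduction of Section~3, $\kappa$ is an absolute constant. The third ingredient is the cardinality bound $\#\LL \le c n$, which one obtains by a level-by-level counting: $\LL$ is exactly the set of light children in $\BB$ of $\thd$-nodes (every heavy child lies in $\thd$), so that Lemma~\ref{lb1} ($\#\thd \le n$) together with the size bound $|R_m| \le 2^{m+1}$ (Proposition~\ref{construct}~(2)) and the minimality property~(4) restrict the total number of such light children to $O(n)$.

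Finally, since $\ell_q$ is of type $q$ for $1 < q \le 2$, a Maurey--Carl type inequality for entropy numbers of absolutely convex hulls of finite families in type-$q$ spaces (cf.\ \cite{CrSt}) gives
\[
e_n(\tilde W) \le c\,\max_L \|g_L\|_q \cdot \Big(\frac{\log(1 + \#\LL/n)}{n}\Big)^{1-1/q}.
\]
With $\#\LL \le c n$ and $\max_L \|g_L\|_q \le c$, the logarithm becomes an absolute constant and yields $e_n(W_\LL^1) \le c\, n^{-(1-1/q)}$, as claimed. The principal obstacle is the combinatorial step $\#\LL \le c n$, where the minimality built into the order-net construction is crucial --- without it, additional root points from $R_{m+1}\setminus R_m$ could generate arbitrarily many light siblings inside a single heavy domain and spoil the bound.
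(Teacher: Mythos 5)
Your overall mechanism -- factor $W_\LL^1$ through a finite--dimensional $\ell_1$ space and apply a Carl/Maurey--type entropy bound for operators into the type--$q$ space $\ell_q(T)$, with columns bounded by $\kappa$ -- is exactly the mechanism of the paper, and your rigidity observation and the factorization $W_\LL^1=\tilde W\circ P$ with $\norm{P}\le 1$ are correct. The genuine gap is the combinatorial step: the claim $\#\LL\le c\,n$ is false for general trees. The set $\LL$ consists of the extremal light domains, i.e.\ the light children in $\BB$ of heavy domains; Lemma \ref{lb1} bounds the number of heavy domains by $n$, but a single heavy domain of order $m$ may have up to $\#R_{m+1}\le 2^{m+2}$ children in $\B_{m+1}$ (see (\ref{sizeRm})), almost all of which can be light, so $\#\LL$ may grow exponentially in $n$. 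Neither the level--size bound nor the minimality property (4) of Proposition \ref{construct} prevents this: property (4) expresses metric efficiency of the order nets, not a restriction on the branching of $\BB$. The paper states this explicitly in the Final Remarks ($\#\Upsilon_\LL^\circ=\#\LL$ ``does not admit any uniform estimate''), and this is precisely the reason for the four--piece splitting. With $\#\LL$ exponential in $n$ your hull estimate only yields $e_n(\tilde W)\le c\max_L\norm{g_L}_q$, a constant; even a polynomial bound on $\#\LL$ would leave an extra $(\log n)^{1-1/q}$, which is fatal in this critical case.

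Fortunately your own construction contains the repair, and it is the paper's proof: the column $g_L$ depends only on $r_L^\bullet$, since $g_L=\s(r_L^\bullet)\,\al(\cdot)\,\on_{\{\cdot\,\pq r_L^\bullet,\ \cdot\,\equiv r_L^\bullet\}}$ when $r_L^\bullet\equiv r_L^\circ$ and $g_L=0$ otherwise; distinct $L$ with the same heavy parent therefore produce the same column. So one should factor through $\ell_1(\Upsilon_\LL^\bullet)$ with $\Upsilon_\LL^\bullet=\set{r_L^\bullet : L\in\LL}$ instead of $\ell_1(\LL)$. Every $r_L^\bullet$ is the root of a heavy domain, whence $\#\Upsilon_\LL^\bullet\le\#\thd\le n$ by (\ref{bsizeTmu}), and Carl's bound for maps from $\ell_1^N$, $N\le n$, into a type--$q$ space gives $e_n(W_\LL^1)\le c\,\kappa\,n^{-\QQ}$. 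Your column bound $\norm{g_L}_q\le\kappa$ and the normalization of $\kappa$ via the scaling of $c_0$ are fine as stated; only the index set must be changed from $\LL$ to $\Upsilon_\LL^\bullet$.
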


\begin{proof}
For $s\in T$ let $L$ be the unique domain in $\LL$ with $s\in L$. Clearly, then
$r_L^\bullet\pq s$, hence, if $s\not\equiv r_L^\bullet$, then it follows $W_\LL^1\delta_s=0$.
Thus it suffices to treat the case $s\equiv r_L^\bullet$ and then
\be
\label{repW1}
   (W_\LL^1\delta_s)(t)= \al(t)\sigma(r_L^\bullet)\  \on_{\{t\pq r_L^\bullet\,,\;t\equiv r_L^\bullet\}}  \;.
\ee
Let $\Upsilon_\LL^\bullet:=\set{r_L^\bullet : L\in\LL}$ and define an operator $V_\LL$
from $\ell_1(\Upsilon_\LL^\bullet)$ into $\ell_q(T)$ by
\be
\label{Up}
    (V_\LL \delta_{r_L^\bullet})(t):=\al(t)\sigma(r_L^\bullet)\ \on_{\{t\pq r_L^\bullet\,,\;t\equiv r_L^\bullet\}} \;.
\ee
Then, if $U_1$ is the unit ball in $\ell_1(T)$, by (\ref{repW1}) and
(\ref{Up}) it follows that $W_\LL^1(U_1)=V_\LL(U_1)$, hence $e_n(W_\LL^1)=e_n(V_\LL)$. In order to estimate
the latter entropy numbers we will use
the following convenient result from \cite[Proposition 1]{C}.
It provides a control of the entropy numbers for operators from $\ell_1$--spaces
into those of type $q$ based on the dimension of the first space. We refer to \cite{MaP} or \cite{Pis} for the definition of type $q$ .

\begin{prop}
Let $V$ be an operator from $\ell_1^N$ into a Banach space $X$ of type $q$. Then
for all $n=1,2,\ldots$ it follows that
\[
  e_n(V) \le c(X) f(n,N,q)\,\norm{V}
\]
where the constant $c(X)$ depends only on the type $q$--constant of the space $X$ and
$$
f(n,N,q):=
2^{-\max(n/N;1)}\min\set{1;\left[\max\left(\frac{\log\left(\frac N n+1\right)}{n};\frac 1 N\right)
\right]^{1-\,1/q}}\;.
$$
\end{prop}
Suppose now $n\ge N$. Then
$$
f(n,N,q)= 2^{-n/N}\,N^{-\QQ}\le c(q)\,n^{-\QQ}
$$
and we arrive at
\be
\label{Cm}
e_n(V)\le c(X,q)\,n^{-\QQ}
\ee
whenever $n\ge N$. Here $c(X,q)$ only depends on $q$ and the type $q$--constant of $X$.

In our case the operator $V:=V_\LL$ is defined on
$\ell_1^{N_\LL}=\ell_1(\Upsilon_\LL^\bullet)$ and acts into the space $\ell_q(T)$, which
(cf.~\cite{MaP}) for $1<q\le 2$ is
of type $q$ with type $q$--constant bounded by $\sqrt q$.
Therefore, the important dimension parameter is
$N_\LL:=\# \Upsilon_\LL^\bullet $.
Here we have
by (\ref{bsizeTmuQ})
\be
\label{dimY}
N_\LL=\# \Upsilon_\LL^\bullet \le \#\thd\le n
\ee
where $\mu\in\ell_1(T)$ and $\LL\in \bbL_n$ are related via $\LL=\L$.
Thus (\ref{Cm}) applies
and leads to
\[
   e_n(W_\LL^1) =  e_n(V_\LL) \le c\  ||V_\LL|| \ n^{-(1-1/q)} \le
   c\  ||W|| \ n^{-\QQ}
\]
with $c$ only depending on $q$.
In view of Proposition \ref{red2}, this completes the proof of Proposition \ref{pestW1}
by $\|W\|\le 2\,\|V_{\al,\s}\|\le 2\,\kappa$.
\end{proof}
\medskip

Our next objective is to estimate $e_n(W_\LL^2)$. Here Proposition \ref{crucial} will play an
important role.

\begin{prop}
\label{pestW2}
There is a constant $c=c(q)$  such that
\be
\label{estW2}
e_n(W_\LL^2)\le c\,n^{-\QQ}\;.
\ee
\end{prop}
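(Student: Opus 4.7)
Proof plan. Proposition \ref{crucial} furnishes the essential input $\|f_L\|_q \le c/|L|$ for each generic $L \in \LL$, where $f_L := W_\LL^2 \delta_{r_L^\circ}$. A direct check using the branch property of the partition $\II$ shows that $W_\LL^2 \delta_s = f_L$ when $s \in L$ satisfies $s \equiv r_L^\circ$, and $W_\LL^2 \delta_s = 0$ otherwise: indeed, if $s \in L$ with $s \not\equiv r_L^\circ$, then any $t \in (r_L^\bullet, r_L^-]$ with $t \equiv s$ would force $r_L^\circ$, which lies between $t$ and $s$ on the branch $[\ro, s]\cap I_{k(s)}$, to satisfy $r_L^\circ \equiv s$, a contradiction. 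One thus factors $W_\LL^2 = T_2 \circ P$, where $P : \ell_1(T) \to \ell_1(\LL)$ is the contractive projection $\delta_s \mapsto \delta_L$ on $s \in L_\circ := \{t \in L : t \equiv r_L^\circ\}$ (and $0$ otherwise; contractive because the sets $L_\circ$ are pairwise disjoint), and $T_2 : \ell_1(\LL) \to \ell_q(T)$ sends $\delta_L \mapsto f_L$. Hence $e_n(W_\LL^2) \le e_n(T_2)$ and it suffices to bound $e_n(T_2)$.

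Split $T_2 = \sum_{m \ge 1} T_2^{(m)}$ by the order $m = |L|$. Each piece $T_2^{(m)} : \ell_1(\LL_m) \to \ell_q(T)$ has source dimension $N_m \le \# R_m \le 2^{m+1}$ (by (\ref{sizeRm})) and operator norm $\le c/m$. I would apply the Carl-type estimate quoted in the proof of Proposition \ref{pestW1} to each $T_2^{(m)}$ with an entropy budget $n_m$ per level satisfying $\sum_m n_m = O(n)$. For the low levels $m \lesssim \log_2 n$, the choice $n_m \asymp 2^{m+1}\,[1 + (1-1/q)(\log_2 n - m) + \log_2 m]$ invokes the exponential-decay factor $2^{-n_m/N_m}$ in Carl's bound and makes each such level contribute at most $c\,m^{-2}\,n^{-(1-1/q)}$, which is summable to the target $O(n^{-(1-1/q)})$.

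The main obstacle is the deep-level tail $m \gtrsim \log_2 n$, where a naive use of $\|T_2^{(m)}\| \le c/m$ alone gives a harmonic-type sum $\sum_{m} 1/m$ and threatens an extra $\log n$ factor. Overcoming this requires combining Carl's bound in its $n_m < N_m$ regime with the further structural input behind Proposition \ref{crucial}: the minimality in Proposition \ref{construct}(4) sharpening the $c/m$ estimate on the individual $\|f_L\|_q$, and the cardinality constraint $\#\thd \le n$ from Lemma \ref{lb1} limiting the effective population of deep light sets. Together these should allow the cumulative deep-level contribution to be controlled without the logarithmic loss, yielding the desired bound $e_n(W_\LL^2) \le c\, n^{-(1-1/q)}$.
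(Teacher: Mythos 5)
Your reduction coincides with the paper's up to the very last step: you correctly observe that $W_\LL^2\delta_s$ is either $0$ or equals $W_\LL^2\delta_{r_L^\circ}$, so that everything factors through $\ell_1$ over the generic light sets and $e_n(W_\LL^2)$ is the entropy of the absolutely convex hull of the vectors $f_L$; you then feed in exactly the paper's two ingredients, namely $\norm{f_L}_q\le c\,|L|^{-1}$ from Proposition \ref{crucial} and the level count $\#\set{L : |L|=m}\le \# R_m\le 2^{m+1}$. The paper finishes from this data in one stroke: since the non-increasing rearrangement of the norms decays like $c(\log k)^{-1}$, which is strictly faster than the critical rate $(\log k)^{-\QQ}$ for convex hulls in type~$q$ spaces, \cite[Proposition 6.2]{CKP} applies and even gives $c\,n^{-\QQ}(\log n)^{-1/q}$. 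You instead try to re-derive such a bound by a level-by-level Carl estimate with an entropy budget $n_m$, and you yourself leave the decisive part — the deep levels $m\gtrsim\log_2 n$ — unproved ("should allow"). That is a genuine gap, and the two rescue ingredients you name cannot close it: property (4) of Proposition \ref{construct} has already been spent in full to produce the bound $c/|L|$ of Proposition \ref{crucial}, and $\#\thd\le n$ controls only the heavy domains, whereas the number of light domains admits no uniform bound (the paper's Final Remarks stress exactly this); at a deep level the only cardinality information is the $2^{m+1}$ you have already used.

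Moreover, the per-level budgeting mechanism itself cannot work there: to push $e_{n_m}(T_2^{(m)})$ below the trivial bound $c/m$, Carl's estimate in the regime $n_m<N_m$ forces $n_m\gtrsim m$, and since the light sets live on up to $n$ levels this would require a total budget of order $n^2$; with only $O(n)$ available, the uncompensated levels still produce a divergent harmonic-type sum. The step can be repaired, but differently: either group the levels into dyadic blocks $2^j\le |L|<2^{j+1}$ (so the operator norm of a block is governed by the bottom level, $c\,2^{-j}$, while its log-cardinality is governed by the top level, $\approx 2^{j+1}$) and give block $j$ the budget $n_j\asymp n\,2^{-\delta j}$, which makes the deep contribution a convergent geometric series of order $n^{-\QQ}$ — this is in essence a proof of the cited convex-hull estimate — or, as in the paper, exploit that generators of norm at most $h$ have absolutely convex hull inside a ball of radius $h$, i.e.\ quote \cite[Proposition 6.2]{CKP} directly. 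As written, your argument establishes the correct reduction and inputs but not the conclusion.
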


\begin{proof}
Take $s\in T$ and choose as before the corresponding $L\in \LL$ with $s\in L$. In the case
that $L$ is degenerated we have $W_\LL^2 \delta_s=0$, thus it suffices to investigate those
$s\in T$ for which the corresponding $L$ is generic, i.e., we have $r_L^\bullet\pq r_L^-\pc r_L^\circ$.
Furthermore, whenever $s\not\equiv r_L^-$, then $W_\LL^2 \delta_s=0$ as well. On the other hand,
if $s\equiv r_L^-$, then we have $\s(s)=\s(r_L^-)=\s(r_L^\circ)$ and
\[
   \on_{\{ r^\bullet_L \prec t \preceq r^-_L\,,\; t\equiv s\}} =
   \on_{\{ r^\bullet_L \prec t \preceq r^-_L\,,\; t\equiv  r^\circ_L\}}\;.
\]
For generic $L\in\LL$ we define elements $x_L\in\ell_q(T)$ by
\[
  x_L(t)  :=  \alpha(t) \sigma(r^\circ_L) \on_{\{ r^\bullet_L \prec t \preceq r^-_L, t\equiv  r^\circ_L\}}
\]
and a set $C_\LL\subseteq \ell_q(T)$ by
$$
C_\LL:=\set{x_L : L\in \LL\;\mbox{is generic}}\;.
$$
Doing so gives
$$
e_n(W_\LL^2)= e_n(\mathrm{aco}(C_\LL))
$$
where $\mathrm{aco}(C_\LL)$ denotes the absolutely convex hull of $C_\LL\subseteq\ell_q(T)$.

Take a generic $L\in\LL$. Then there is an $m\ge 1$ such that $L=B_{\tau,m}$ with $\tau:=r_L^\circ\in R_m$.
Setting $r:=r_L^\bullet$, then $r\in R_{m-1}$,
$r\pc \tau$,
$\tau\in R_m\setminus R_{m-1}$ and $\tau\in B_{r,m-1}$. Thus we are exactly in the
situation of Proposition \ref{crucial} with $\tau=r_L^\circ$ and $r=r_L^\bullet$ which implies
\be
\label{estxL}
\norm{x_L}_q =\s(r_L^\circ)\Big(\sum_{r_L^\bullet\pc v\pq r_L^-\atop{v\equiv r_L^\circ}}\al(v)^q\Big)^{1/q}
\le c\,m^{-1}=c\,|L|^{-1}\;.
\ee
Hence, for any $h>0$
\be
\label{estcard}
  \#\left\{ L\in \LL : ||x_L||_q\ge h \right\} \le \#\left\{ L\in \LL: |L|\le \frac c h \right\} \le 2^{c/h +2}
\ee
where we used $\#\{L\in \LL : |L|=m\}\le \# R_m \le 2^{m+1}$ in the last estimate.

By \cite[Proposition 6.2]{CKP}, which handles the entropy of convex hulls in type $q$ spaces in the non-critical case,
estimate (\ref{estcard}) yields
\[
   e_k \left(\mathrm{aco}(C_\LL) \right) \le c\ k^{-\QQ} (\log k)^{-1/q}, \qquad k\ge 1\;.
\]
For $k=n$ we have
\[
   e_n(W_\LL^2) = e_n \left(\mathrm{aco}(C_\LL) \right) \le c\ n^{-\QQ} (\log n)^{-1/q}\;.
\]
This is even slightly better than required in (\ref{estW2}) and completes the proof.
\end{proof}
\medskip

Our final objective is to estimate $e_n(W_\LL^3)$ suitably.

\begin{prop}
\label{pestW3}
There is a constant $c=c(q)$ such that
$$
   e_n(W_\LL^3)\le c\,n^{-\QQ}\;.
$$
\end{prop}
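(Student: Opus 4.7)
The operator $W_\LL^3$ has a transparent structure: $W_\LL^3 \delta_s = \s(r_L^\circ)\al(r_L^\circ)\,\delta_{r_L^\circ}$ whenever $s$ lies in a generic cell $L \in \LL$ with $s \equiv r_L^\circ$, and $W_\LL^3 \delta_s = 0$ otherwise. Thus $W_\LL^3 = D \circ P$, where $P : \ell_1(T) \to \ell_1(\{L : L \text{ generic}\})$ is the contractive aggregation $(P\mu)_L = \sum_{s \in L,\, s \equiv r_L^\circ} \mu(s)$ and $D$ is the diagonal operator $D\delta_L = d_L \delta_{r_L^\circ}$ with $d_L := \s(r_L^\circ)\al(r_L^\circ)$. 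Since $\|P\| \le 1$, it suffices to show $e_n(D) \le c\,n^{-\QQ}$. By the net property $(\ref{net2})$ applied to the parent $B_{r_L^\bullet,|L|-1}$ of $L$ in $\BB$ and the observation that the defining sum of $d_\II(r_L^\bullet, r_L^\circ)$ contains the term $\al(r_L^\circ)^q$, one obtains $d_L \le c\,|L|^{-1/q'}$. Combined with $\#\{L \in \LL : |L| = m\} \le |R_m \setminus R_{m-1}| \le 2^{m+1}$, the diagonal $(d_L)$ falls precisely into the critical regime, where a direct application of the CKP estimate used for $W_\LL^2$ would yield an unwanted extra logarithmic factor.

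The new ingredient for $W_\LL^3$, absent from $W_\LL^2$, is that the image vectors $d_L \delta_{r_L^\circ}$ lie in mutually orthogonal coordinate directions of $\ell_q(T)$, so $D(B_{\ell_1})$ is the absolutely convex hull of orthogonal weighted unit vectors, a set to which a chaining argument tailored to type-$q$ Banach spaces applies. For $q = 2$, the Gaussian width of the image is bounded:
\[
  g\bigl(D(B_{\ell_1})\bigr) = \E \sup_L d_L |g_L| \le C,
\]
since at each level $m$ the factor $m^{-1/2}$ from $d_L$ exactly cancels $\E \max_{L : |L| = m}|g_L| \asymp \sqrt{m}$ (a maximum over at most $2^{m+1}$ Gaussians), and the supremum over levels is controlled by sub-Gaussian concentration using the geometric growth of $\#\{L : |L| = m\}$. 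Sudakov's inequality then yields $e_n(D) \le c\,g/\sqrt{n} \le c/\sqrt{n}$, which is exactly the $q=2$ case. For general $q \in (1,2]$ one carries out the analogous computation in the type-$q$ setting: the appropriate Rademacher-type width (equivalently, Talagrand's $\gamma_{q'}$-functional) of $D(B_{\ell_1})$ is again $O(1)$ by the same level-by-level cancellation, and the type-$q$ analog of Sudakov–chaining, $e_n(K) \le c\,w/n^{1/q'}$, closes the argument.

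The main obstacle is establishing the type-$q$ analog of the Sudakov bound for $q \in (1,2)$. Unlike the Hilbert case, where Gaussian processes and Sudakov minoration are classical, here one must either invoke Talagrand's majorizing-measure theorem for $q'$-stable or Rademacher processes adapted to type-$q$ spaces, or argue directly through the stratification $D = \bigoplus_m D_m$ by level together with the $\ell_q$-sum rule
\[
  e_{\sum_m n_m - M + 1}\!\Bigl(\sum_m D_m\Bigr) \le \Bigl(\sum_m e_{n_m}(D_m)^q\Bigr)^{1/q},
\]
which holds precisely because of the orthogonality of image supports. This $\ell_q$-summation, as opposed to the coarser $\ell_1$-summation of entropy numbers, is what prevents the critical-case logarithmic factor from reappearing when the Carl--Pajor bound is applied to each finite-rank block $D_m$ and the tail $\sum_{m > M} D_m$ is controlled by its operator norm $\le c M^{-1/q'}$ for $M$ of order $\log n$.
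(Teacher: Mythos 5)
Your reduction is the same as the paper's: $W_\LL^3$ factors through (indeed, has the same image of the unit ball as) a diagonal operator from $\ell_1$ to $\ell_q$ with diagonal entries $\gamma_L=\s(r_L^\circ)\al(r_L^\circ)\le d_\II(r_L^\bullet,r_L^\circ)\le\eps_{|L|-1}\le c\,|L|^{-1/q'}$, and with at most $2^{m+1}$ entries at level $m$, so that the non-increasing rearrangement satisfies $\gamma_k^*\le c(\log k)^{-1/q'}$. Up to this point you agree with the paper. The gap is in what you do next. The paper closes the argument by citing K\"uhn's result on entropy numbers of diagonal operators with logarithmic (critical) diagonal, which gives $e_k\le c\,k^{-(1-1/q)}$ with no extra logarithm. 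You replace this citation by two proposed arguments, and neither of them works for $q\in(1,2)$. The Sudakov/Gaussian-width argument is fine for $q=2$ (this is in effect why the paper remarks that the $q=2$ case is easier, via Carl--Edmunds type results in Hilbert space), but the ``type-$q$ analog of Sudakov chaining'' $e_n(K)\le c\,w\,n^{-1/q'}$ that you need for $q<2$ is exactly the kind of critical-case statement that is not available; you acknowledge it as ``the main obstacle'' but do not prove it, and the paper's Final Remarks point out that precisely such convex-hull estimates in type-$q$ spaces are missing from the literature -- the only reason $W_\LL^3$ can be handled is that its image vectors are disjointly supported, i.e.\ it is literally a diagonal operator, for which K\"uhn's theorem exists.

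Your fallback, the level-wise blocking $D=\bigoplus_m D_m$ with the ``$\ell_q$-sum rule,'' also does not deliver the bound. That rule is obtained by covering the product $\prod_m D_m(B_{\ell_1})$, which discards the joint constraint $\sum_m\|\mu_m\|_1\le 1$; quantitatively, if you give level $m$ an entropy budget $n_m$ with $\sum_m n_m\lesssim n$, then Carl's bound yields $e_{n_m}(D_m)\gtrsim$ terms whose $\ell_q$-aggregate is at best of order $K\,n^{-1/q'}$, where $K$ is the number of levels carrying nontrivial budget (since $1/q+1/q'=1$, equal allocation over $K$ blocks gives exactly $K\,n^{-1/q'}$), and one cannot take $K$ bounded: the levels with $m$ between, say, $\log n$ and $n$ have norms $m^{-1/q'}\gg n^{-1/q'}$ and enormous rank, so they cannot be absorbed into a tail term. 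Your own tail treatment shows the problem: with $M$ of order $\log n$ the discarded part has operator norm of order $(\log n)^{-1/q'}$, which is far larger than the target $n^{-1/q'}$. In short, the blocking-plus-additivity scheme reintroduces exactly the critical-case loss that the proposition is designed to avoid; the missing ingredient is the sharp entropy estimate for diagonal operators with $(\log k)^{-1/q'}$ diagonal (K\"uhn's Proposition 3.1), or an equivalent argument, and your proposal does not supply it.
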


\begin{proof}
Take $s\in T$ and $L\in\LL$ with $s\in L$. If $L$ is degenerated, then $W_\LL^3\delta_s=0$.
This is so too if $s\not\equiv r_L^\circ$. Consequently,
\beaa
\set{W_\LL^3 \delta_s : s\in T}&=&
\set{\s(s)\al(r_L^\circ)\delta_{r_L^\circ} : s\equiv r_L^\circ\;,L\,\mbox{with}\;s\in L\;\mbox{is generic}\;,s\in T}\cup\set{0}\\
&=&\set{\s(r_L^\circ)\al(r_L^\circ)\delta_{r_L^\circ} : L\;\mbox{is generic}}\cup\set{0}\;.
\eeaa
Set
$$
G_\LL:=\set{r_L^\circ : L\;\mbox{is generic}}
$$
and define a diagonal operator $D_\LL^3: \ell_1(G_\LL)\mapsto \ell_q(G_\LL)$ by
$$
D_\LL^3(\delta_{r_L^\circ}):=\gamma_L \,\delta_{r_L^\circ}\;,\quad L\;\mbox{generic}\;,
$$
where $\gamma_L:= \s(r_L^\circ)\al(r_L^\circ)$ . Then $e_n(W_L^3)=e_n(D_\LL^3)$ and it suffices to
estimate the $\gamma_L$ suitably.

Recall that $r^\circ_L$ and $r^\bullet_L$ belong to the same element of the partition
$\B_{|L|-1}$, hence, if  $L$ is generic, i.e., if $r^\bullet_L \pc r^\circ_L$,  by (\ref{net2})
 we obtain
$$
\gamma_L:= \s(r_L^\circ)\al(r_L^\circ)\le d_\II(r_L^\bullet,r_L^\circ)\le \eps_{|L|-1}\;.
$$
It follows that
\[
  \#\left\{ L: \dg_L\ge \eps_m \right\} \le \#\left\{ L: |L|\le m+1 \right\} \le 2^{m+3}.
\]
Again we used $\#\{L\in\LL : |L|=m\}\le \# R_m \le 2^{m+1}$ in the last step.
If $\{\dg^*_k\}_{k\ge 1}$ is the non-increasing rearrangement of
$\{\dg_L\}_{L\in \LL}$, we have
\[
    \dg^*_k \le c\ (\log k)^{-\QQ}.
\]
By using \cite[Proposition 3.1]{Kuhn} where the entropy of critical
diagonal operators with logarithmic diagonal is handled, we obtain
\[
  e_k(W_\LL^3)=e_k(D_\LL^3)\le c\ k^{-\QQ},\qquad  k\ge 1.
\]
For $k=n$ we have
\[
  e_n(W_\LL^3)=e_n(D_\LL^3)\le c\ n^{-\QQ}
\]
as asserted.
\end{proof}
\medskip

\section{Final Remarks}
We must acknowledge that the proof of Theorem \ref{main} or Theorem \ref{main1}, respectively,
is quite complicated. One of the reasons for this is that
so many operators $W_\LL^i$ are involved. Thus a natural question is why do not two or three operators suffice.
Indeed, once a very natural bound for $W_\LL^4$ (Proposition \ref{pestW4}) is obtained,
it is tempting to use (for the generic case) a splitting into two pieces instead into four
 as in (\ref{fourpieces}), i.e., to split $[\ro,s]$ only as
\[
  [\ro,s]=[\ro,r_L^\circ] \cup (r_L^\circ,r].
\]
In different words, why we cannot add up $W_\LL^1,W_\LL^2,W_\LL^3$ into one operator and to
deal with it as we did with $W_\LL^1$ ? In fact, the corresponding bound is dimension--based.
Therefore, proceeding in this way, we must replace the dimension bound (\ref{dimY})
with some bound for $\# \Upsilon_\LL^\circ $ where
$\Upsilon_\LL^\circ:=\{ r_L^\circ :\ L\in \LL\}$. Unfortunately,
$\# \Upsilon_\LL^\circ=\#\LL$, the number of \textit{extremal light} domains,
does not admit any uniform estimate, unlike the number of \textit{heavy} domains we used in the proof.
The only chance to estimate $\# \Upsilon_\LL^\circ$ is to make further assumptions about the structure
of the underlying tree $T$.
Therefore, the splitting into two pieces does not work for general trees.

Once this difficulty is understood, the next natural idea is to use a splitting into three pieces,
\[
  [\ro,s]=[\ro,r_L^\bullet] \cup (r_L^\bullet, r_L^\circ] \cup (r_L^\circ,r].
\]
In different words, why we cannot add up $W_\LL^2$ and $W_\LL^3$ into one operator and to
deal with it as we did with $W_\LL^2$ ? Recall that the corresponding bound from
Proposition \ref{pestW2} is based on the size evaluation $||x_L||_q\le c\,|L|^{-1}$
from (\ref{estxL}). That one in turn was built upon the tricky property (4) from Proposition
\ref{construct}. Once we use the three pieces splitting, we can only use (\ref{net2})
for the evaluation of $||x_L||$, as we did when working with $W_\LL^3$. On this way
we only obtain $||x_L||\le \eps_{|L|-1}= c\,|L|^{-\QQ}$. Unfortunately, we \textit{do not know} whether
or not this weaker bound provides the necessary bound $c\, n^{-\QQ}$ for the entropy numbers
of the convex hull of a sequence. To the best of our knowledge, the required result
is missing in the literature for subsets of spaces of type $q$ (or even for subsets of some
$\ell_q$--spaces). Thus it is this knowledge gap that forced us to struggle
with partition constructions possessing the mentioned property (4), and then extract the
well studied diagonal operators $W_\LL^3$ or $D_\LL^3$ by the further splitting
$(r_L^\bullet, r_L^\circ] = (r_L^\bullet, r_L^-] \cup \{r_L^\circ\}$, eventually
coming to the proof presented here. Let us still mention that for $q=2$, i.e., for
sets in Hilbert spaces, such entropy estimates for convex hulls of sequences are known (cf.~Proposition 4 in \cite{CE}).
Hence, if $q=2$, we may add up $W_\LL^2$ and $W_\LL^3$ into one operator which slightly simplifies
the proof in that case because here we neither need property (4) of Proposition \ref{construct} nor Proposition \ref{crucial}.

Another difficulty comes from the partition $\II$ of $T$ generated by the weight $\s$. This forced us
to replace the metric $d$ on $T$ by the localized "distance" $d_\II$. Of course, this additional difficulty does
not appear for one--weight summation operators $V_{\al,\s}$ with $\s(t)=1$, $t\in T$. Hence, also in
that case the proof of Theorem \ref{main} becomes slightly less involved.
\bigskip

\noindent
\textbf{Acknowledgement:}
\ The research was supported by the RFBR-DFG grant 09-01-91331 "Geometry and asymptotics of random structures".
The work of the first named author was also supported by RFBR grants 09-01-12180-ofi\_m and
10-01-00154a, as well as by Federal Focused Programme 2010-1.1-111-128-033.

We are grateful to M.Lacey for his interest and useful discussions that led us to the present research.

\bibliographystyle{amsplain}

\begin{thebibliography}{1000}

\bibitem{C}
B. Carl, \textit{Inequalities of Bernstein--Jackson--type and the degree of compactness
of operators in Banach spaces.} Ann.~Inst.~Fourier \textbf{35} (1985), 79--118.


\bibitem{CE}
B. Carl and D.E. Edmunds,
\textit{Gelfand numbers and metric entropy of convex hulls in Hilbert spaces.}
Stud. Math. \textbf{159} (2003), 391-402.

\bibitem{CKP}
B. Carl, I. Kyrezi, and A. Pajor, \textit{Metric entropy of convex hulls
in Banach spaces.} J. London Math. Soc. \textbf{60} (1999), 871--896.


\bibitem{CS} B. Carl and I. Stephani,
\textit{Entropy, Compactness and Approximation of Operators.}
Cambridge University Press, Cambridge 1990.

\bibitem{CrSt} J. Creutzig and I. Steinwart, \textit{Metric entropy of convex hulls in type $p$ spaces --
the critical case.} Proc. Amer. Math. Soc. \textbf{130} (2002), 733--743.

\bibitem{ET} D.E. Edmunds and H. Triebel, \textit{Function Spaces, Entropy
Numbers and Differential Operators.} Cambridge University Press, Cambridge 1996.

\bibitem{Gao} F. Gao, \textit{Metric entropy of convex hulls.}
Isr. J. Math. \textbf{123} (2001), 359--364.

\bibitem{Kuhn} Th. K\"uhn, \textit{Entropy numbers of general diagonal operators.}
Rev. Mat. Complut. \textbf{18} (2005), 479--491.

\bibitem{Lif10} M.A. Lifshits, \textit{Bounds for entropy numbers for some critical operators.} To appear in
Transactions of AMS (2010+). {\tt www.arxiv.org/abs/1002.1377}.

\bibitem{LifLin10} M.A. Lifshits and W. Linde, \textit{Compactness properties of
weighted summation operators on trees.} Preprint\
 (2010)  {\tt www.arxiv.org/abs/1006.3867}.

\bibitem{Lin} W. Linde, \textit{Nondeterminism of linear operators and lower entropy estimates.}
 J. Fourier Anal. Appl. \textbf{14} (2008), 568--587.

\bibitem{MaP}
B. Maurey and G. Pisier,
\textit{Series de variables al\'eatoires vectorielles independantes et
 propri\'et\'es g\'eom\'etriques des espaces de Banach. }
Stud.~Math.~\textbf{58} (1976), 45--90.

\bibitem{Pis}
G. Pisier,
\textit{The Volume of Convex Bodies and Banach Space Geometry.}
Cambridge Univ. Press, Cambridge, 1989.
\end{thebibliography}

\vspace{1cm}

\parbox[t]{7cm}
{Mikhail Lifshits\\
St.Petersburg State University\\
Dept Math. Mech. \\
198504 Stary Peterhof, \\
Bibliotechnaya pl., 2\\
Russia\\
email: lifts@mail.rcom.ru}\hfill
\parbox[t]{6cm}
{Werner Linde\\
Friedrich--Schiller--Universit\"at Jena \\
Institut f\"ur Stochastik\\
Ernst--Abbe--Platz 2\\
07743 Jena\\
Germany\\
email: werner.linde@uni-jena.de}

\end{document}